\newcommand{\res}{\operatorname{Res}}
\newcommand{\coker}{\operatorname{coker}}
\newcommand{\spn}{\operatorname{Span}}
\newcommand{\Laur}{\operatorname{Laurent}}
\newcommand{\LP}{\operatorname{\mathbf{LP}}}
\newcommand{\modd}{\operatorname{Mod}}
\newcommand{\markk}{\operatorname{Mark}}
\newcommand{\bff}{\mathbf f}
\newcommand{\bg}{\mathbf g}
\newcommand{\bn}{\mathbf n}
\newcommand{\G}{_{\Gamma}}
\newcommand{\caB}{\mathcal B}
\newcommand{\caC}{\mathcal C}
\newcommand{\caD}{\mathcal D}
\newcommand{\caE}{\mathcal E}
\newcommand{\caF}{\mathcal F}
\newcommand{\caI}{\mathcal I}
\newcommand{\caL}{\mathcal L}
\newcommand{\caO}{\mathcal O}
\newcommand{\caP}{\mathcal P}
\newcommand{\caR}{\mathcal R}
\newcommand{\caS}{\mathcal S}
\newcommand{\caU}{\mathcal U}
\newcommand{\caV}{\mathcal V}
\newcommand{\bP}{\mathbf P}
\newcommand{\subE}{_{\mathcal E}}
\newcommand{\subP}{_{\mathbb P^1}}
\newcommand{\subdz}{_{dz^2}}
\newcommand{\subtau}{_{\tau}}
\newcommand{\omCB}{\omega_{\caC/\caB}}
\newcommand{\omVD}{\omega_{V/D}}
\newcommand{\omVS}{\omega_{V\times S/D\times S}}
\newcommand{\omRS}{\omega_{\caR/S\times D^m}}
\newcommand{\piVD}{\pi:V\rightarrow D}
\newcommand{\piCB}{\Pi:\caC\rightarrow\caB}
\newcommand{\piRS}{\Pi:\caR\rightarrow S\times D^m}
\newcommand{\dzdw}{\big(\frac{dz}{z}-\frac{dw}{w}\big)}
\newcommand{\Lsubuv}{\caL_{\{u,v\}}}
\newtheorem{thrm}{Theorem}
\newtheorem{lemma}[thrm]{Lemma}
\newtheorem{corollary}[thrm]{Corollary}
\newtheorem{defn}[thrm]{Definition}
\newtheorem*{thrm*}{Theorem}
\begin{document}

\title{Infinitesimal deformations of nodal stable curves\footnote{2010 {\em Mathematics Subject Classification.} Primary 14H15, 32G15.}}         % Enter your title between curly braces
\author{Scott A. Wolpert\footnote{Partially supported by National Science Foundation grant DMS - 1005852.}}        % Enter your name between curly braces
\date{\today}          % Enter your date or \today between curly braces

\maketitle
\begin{abstract}
An analytic approach and description are presented for the moduli cotangent sheaf for suitable stable curve families including noded fibers.  For sections of the square of the relative dualizing sheaf, the residue map at a node gives rise to an exact sequence.  The residue kernel defines the vanishing residue subsheaf.  For suitable stable curve families, the direct image sheaf on the base is locally free and the sequence of direct images is exact.  Recent work of Hubbard-Koch and a formal argument provide that the direct image sheaf is naturally identified with the moduli cotangent sheaf.  The result generalizes the role of holomorphic quadratic differentials as cotangents for smooth curve families.  Formulas are developed for the pairing of an infinitesimal opening of a node and a section of the direct image sheaf.  Applications include an analytic description of the conormal sheaf for the locus of noded stable curves and a formula comparing infinitesimal openings of a node.  The moduli action of the automorphism group of a stable curve is described.  An example of plumbing an Abelian differential and the corresponding period variation is presented.   
\end{abstract}

\section{Introduction.}

A torus $T$ with fundamental group marking is uniformized by the complex plane $\mathbb C$ with variable $z$ and a lattice generated by $1$ and $\tau$,  $\tau$ in the upper half plane $\mathbb H$.  The change of marking equivalence relation for tori is given by the action of the modular group $SL(2;\mathbb Z)$ on $\mathbb H$.  The Gr\"{o}tzsch and Rauch variational formulas provide that the differential of the moduli parameter $\tau$ is represented by the quadratic differential $-2idz^2\in H^0(T_{\tau},\caO(K^2))$.  With the analogy to higher genus moduli in mind, consider $\mathbb H$ as the Teichm\"{u}ller space, $SL(2;\mathbb Z)$ as the mapping class group and the quotient as the moduli space.   The compactification of the quotient $\mathbb H/SL(2;\mathbb Z)$ is given by introducing the coordinate $\mathbf t=e^{2\pi i\tau}$ for a neighborhood of infinity.  The differential of the nonzero moduli parameter $\mathbf t$ is $4\pi \mathbf tdz^2\in H^0(T_{\tau},\caO(K^2))$, which formally vanishes at infinity or equivalently $d\mathbf t/\mathbf t$ is represented by the quadratic differential $4\pi dz^2$.  In particular, quadratic differentials model the logarithmic derivative of the moduli parameter at infinity.  Equivalently, the moduli cotangent $4\pi\mathbf tdz^2$ at infinity includes a $\mathbf t$ factor.

The infinitesimal deformation space of a pair $(R,q)$, a compact Riemann surface and a distinguished point, is the cohomology group $H^1(R,\caO(K^{-1}q^{-1}))$ for the canonical bundle $K$ and the inverse of the point bundle.  By Kodaira-Serre duality, the dual infinitesimal deformation space is $H^0(R,\caO(K^2q))$, the space of holomorphic quadratic differentials with possible simple poles at $q$.  Quadratic differentials with simple poles give the moduli cotangent space.  Our goal is to generalize this result for nodal stable curves and describe the moduli cotangent space as a coherent analytic sheaf on the family.

Our discussion begins with the local geometry of the model case, the family of hyperbolas $zw=t$ in $\mathbb C^2$.  Consider for $c,c'$ positive, the singular fibration of $V=\{|z|<c,|w|<c'\}\subset\mathbb C^2$ over $D=\{|t|<cc'\}$ with the projection $\pi(z,w)=t$.  The family $\piVD$ is the local model for the formation and deformation of a node.  The general fiber is an annulus and the special fiber is the union of the germ of the coordinate axes with the origin the node.  The projection differential is $d\pi=zdw+wdz$.  The vertical line bundle $\caL$ over $V$ has non vanishing section 
$z\partial/\partial z-w\partial/\partial w$ and the relative dualizing sheaf $\omVD$ has non vanishing section $dz/z-dw/w$.     

We consider curve families with noded stable fibers. A proper surjective map $\piCB$ of analytic spaces is a family of nodal curves provided at each point $\mathbf c\in\caC$, either $\Pi$ is smooth with one dimensional fibers, or the family is locally analytically equivalent to a locus $zw=f(s)$ in $\mathbb C^2\times\caS$ over $\caS$ with $f$ vanishing at $\Pi(\mathbf c)$.  The locus $\{(0,0)\}\times\caS$ is the loci of nodes.  We consider families 
 with first order vanishing of $f$; for a neighborhood of a node the family is analytically equivalent to a Cartesian product of $\piVD$ and a complex manifold.  Stability of fibers is the condition that each component of the nodal complement in a fiber has negative Euler characteristic.  Negative Euler characteristic ensures that the automorphism group of a component is finite. The general fiber of a family $\piCB$ of nodal curves is a smooth Riemann surface.  The locus of noded curves within the family is a divisor with normal crossings.  For a family $\piCB$, the relative dualizing sheaf is isomorphic to the product of canonical bundles $K_{\caC}\otimes \Pi^*K_{\caB}^{\vee}$.

\begin{figure}[htbp] % float placement: (h)ere, page (t)op, page (b)ottom, other (p)age
  \centering
  % file name: C:/Users/SAW/Desktop/MyFiles/CBMS/CBMSbook/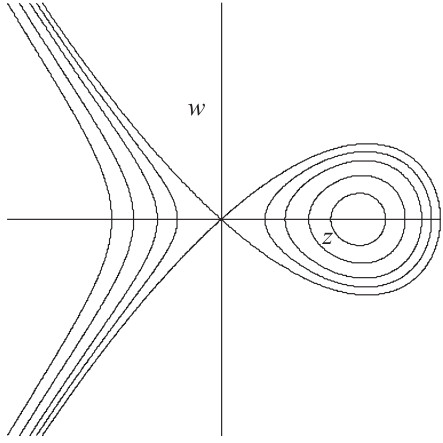
  \includegraphics[bb=0 0 556 556,width=2.4in,height=2.4in]{degngen1}
  \caption{A genus $1$ degenerating family $w^2=(1-z-t)(z^2-t)$.}
  \label{fig:degngen1}
\end{figure}

The relative dualizing sheaf $\omCB$ provides a generalization of the family of canonical bundles for a family of Riemann surfaces. For $k$ positive, sections of $\omCB^k$ over open sets of $\caB$, generalize families of holomorphic $k$-differentials for Riemann surfaces.  A section $\eta$ of $\omCB^k$ on a neighborhood of a node $\pi:V\times S\rightarrow D\times S$  is given as
\[
\eta\,=\,\mathbf f(z,w,s)\dzdw^k
\]
with $\mathbf f$ holomorphic in $(z,w,s)$.  For an annulus fiber and the mapping $z=\zeta,w=t/\zeta,t\ne 0,$ into a fiber, the section is given as
\begin{equation}\label{Laurcoeff}
\eta\,=\,\mathbf f(\zeta,t/\zeta,s)\big(2\frac{d\zeta}{\zeta}\big)^k.
\end{equation}
Sections of $\omCB^2$ generalize families of holomorphic quadratic differentials and in the language of Bers are families of regular $2$-differentials \cite{Bersdeg}.  Infinitesimal opening of a node is described by the variational formula for the parameter $t$.  For $t$ nonzero, the pairing of the infinitesimal variation of $t$ with the section $\eta$ is given in Lemma \ref{plumderiv} as the $-\pi/t$ multiple of the $\zeta$-constant coefficient in the Laurent expansion of $4\mathbf f(\zeta,t/\zeta,s)$.   The appearance of the scaling $t$-factor is intrinsic to the variation of a node.  The present appearance of the $t$-factor is dual to the appearance in the torus example.   

The noded fibers sub family of $\pi:V\times S\rightarrow D\times S$ is $\{(z,w)\mid z=0 \mbox{ or } w=0\}\times S$ over $S$ with the restriction of the section $\eta$ to the fibers given as $\mathbf f(z,0,s)(dz/z)^k$ on $\{w=0\}$ and as $\mathbf f(0,w,s)(-dw/w)^k$ on $\{z=0\}$.  The residue of $\eta$ at the node is $\pm\mathbf f(0,0,s)$.  The residue is coordinate independent and well defined modulo a sign; for $k$ even the residue is well defined.  The kernel of the residue map to $\mathbb C$ defines a subsheaf of $\omCB^2$ as follows.  For a family $\piCB$, let $\mathbf n_k, 1\le k\le m,$ be the $k^{th}$ component of the loci of nodes in $\caC$ and let $\mathbb C_ {\mathbf n_k}$ be the skyscraper sheaf supported on $\mathbf n_k$.  The vanishing residue subsheaf is defined by the exact sequence

\begin{equation*}
0\longrightarrow\caV\longrightarrow\omCB^2\stackrel{\oplus_k\res_k}{\longrightarrow}\oplus_k\mathbb C_{\bn_k}\longrightarrow 0.
\end{equation*}
In the preparatory Lemma \ref{frameeval}, we describe local bases of sections of $\omCB^2$ over suitable opens sets of $\caB$.  The bases have a direct relationship to coordinate cotangent frames for the open sub family of Riemann surfaces.   Lemma \ref{frameeval} provides the basic tool for understanding the vanishing residue subsheaf and the moduli cotangent sheaf.  In Section 4, we show that the direct image $\Pi_*\caV$, with presheaf of sections of $\caV$ over open sets of $\caB$, is locally free and that the corresponding sequence of direct images is also exact.  We then combine the coordinate cotangent frame for sub families property, the definition of $\caV$ and a formal argument to establish the main result.

\begin{thrm*}
The vanishing residue exact sequence is natural for admissible families.  The direct image of the vanishing residue subsheaf is naturally identified with the cotangent sheaf for admissible families.  
\end{thrm*}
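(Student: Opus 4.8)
The plan is to establish the two assertions of the theorem separately and then glue them. Naturality of the exact sequence is the more formal part: a morphism of admissible families is a fiber square over a base map, and the relative dualizing sheaf $\omCB$, being intrinsic, pulls back compatibly. Since the residue of a section $\eta=\bff(z,w,s)\dzdw^2$ at a node is the value $\bff(0,0,s)$, and for $k=2$ this value is coordinate-independent with no sign ambiguity, the residue maps commute with pullback. Hence $\caV$, its inclusion into $\omCB^2$, and the entire sequence transform naturally, which is the first assertion.

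For the identification of $\Pi_*\caV$ with the cotangent sheaf I would localize on $\caB$ and argue first on the dense open smooth locus, then extend across the nodal divisor. Over the smooth locus there are no nodes, the residue map is identically zero, and $\caV$ coincides with $\omCB^2$; the classical relative Kodaira--Serre duality, $H^0(R,\caO(K^2))\cong T^*_{[R]}$ in families, already identifies $\Pi_*\omCB^2$ with the moduli cotangent sheaf there. The substance of the theorem is therefore the extension of this identification across the normal-crossings divisor of noded curves.

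The extension rests on Lemma~\ref{frameeval}, which supplies local bases of sections of $\omCB^2$ over suitable opens of $\caB$ tied directly to coordinate cotangent frames of the sub families of smooth curves. The vanishing-residue condition $\bff(0,0,s)=0$ is exactly the condition singling out the logarithmic cotangent directions carrying the intrinsic scaling factor at an opening node, as in the torus model where the moduli cotangent $4\pi\mathbf t\,dz^2$ carries a $\mathbf t$ factor. Lemma~\ref{plumderiv} makes this quantitative: the node-opening tangent direction pairs with a section of $\caV$ through the constant Laurent coefficient of $4\bff(\zeta,t/\zeta,s)$ times the factor $-\pi/t$, so that sections of vanishing residue pair correctly against the compactified coordinate $\mathbf t$. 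Invoking the identification of the deformation (plumbing) coordinates due to Hubbard--Koch, the frame of $\caV$ furnished by Lemma~\ref{frameeval} is carried to a holomorphic cotangent frame including the correct $\mathbf t$-factor along each component $\bn_k$ of the nodal locus.

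The concluding formal argument combines these ingredients with the results of Section~4, where $\Pi_*\caV$ is shown to be locally free and the sequence of direct images to remain exact. Thus $\Pi_*\caV$ and the cotangent sheaf $\Omega_{\caB}$ are both locally free of the same rank and agree on the dense smooth locus; since the frames of Lemma~\ref{frameeval} extend holomorphically across the nodal divisor and match the cotangent frame there, the two sheaves are identified on all of $\caB$, naturally in the family. I expect the main obstacle to be precisely this extension across the nodal locus: verifying that the vanishing-residue frame maps to a \emph{holomorphic} cotangent frame with the correct $t$-scaling on the node-opening directions, which is where the Hubbard--Koch plumbing coordinates and the Laurent-coefficient pairing of Lemma~\ref{plumderiv} do the essential work, and where the local freeness of $\Pi_*\caV$ established in Section~4 is indispensable.
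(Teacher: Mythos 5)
Your proposal is correct and takes essentially the same route as the paper: naturality from the intrinsic characterizations of $\omCB$, the nodal loci and the residue, and the cotangent identification from the second $(s,t)$ frame of Lemma~\ref{frameeval}, the pairing of Lemma~\ref{plumderiv}, the local freeness of $\Pi_*\caV$ established in Section~4, the Hubbard--Koch results, and analytic continuation from the dense locus of smooth fibers. The only difference is presentational: the paper runs the continuation step as a cocycle comparison between two $(s,t)$ presentations (second frames transform by the coordinate Jacobian), while you phrase it as holomorphic extension of the canonical smooth-locus identification across the nodal divisor --- the same formal argument.
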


We discuss applications.  A section of the moduli cotangent sheaf over an open set of the base is a section of $\omCB^2$ over the open set with residues vanishing.  In the torus example the section vanishes along the fiber and necessarily the residue vanishes.  A description of the moduli cotangent fiber at a nodal curve is provided by the isomorphism between a locally free sheaf and an analytic vector bundle.  For deformations of a Riemann surface, moduli cotangents are represented by quadratic differentials on the surface.  For deformations of a nodal stable curve, moduli cotangents are represented by equivalence classes of sections of the vanishing residue subsheaf - it is not sufficient to specify the restriction of a section to a fiber of a family.   In Lemma \ref{cotfib}, we find that the moduli tangent-cotangent pairing extends to noded fibers.  We also consider the basic geometry of the divisor of noded curves within the family.   We find that the conormal sheaf of the divisor is the subsheaf of $\omCB^2$ with sections over base open sets vanishing along all noded fibers.  The log-cotangent sheaf of the divisor is the direct image of $\omCB^2$, with sections of $\omCB^2$ over base open sets.

The works of Masur \cite{Msext} and Hubbard-Koch \cite{HKmg} provide a foundation for the present considerations. In Section 3, we give the construction of a standard nodal stable family of curves following Masur.  The construction begins with an open set in Teichm\"{u}ller space parameterizing Riemann surfaces with distinguished points and combines plumbing copies of the family $\piVD$.  The constructed family provides the setting for our considerations.  Hubbard and Koch introduce the notion of a $\Gamma$-marking for families of nodal stable curves, a marking modulo Dehn twists about the elements of a multi curve.  They construct a universal analytic family for a $\Gamma$-marking and show that the Deligne-Mumford compactification of the moduli space of Riemann surfaces is analytically described by quotients of $\Gamma$-marked universal families.   From the results of Hubbard and Koch, our considerations apply to the Deligne-Mumford compactification.  

In Section 5, we generalize the Laurent coefficient map of Lemma 4 for expansion (\ref{Laurcoeff}).  The generalization is a period about a collar latitude of the product of a section of the vertical line bundle and a section of $\omCB^2$.  The generalization enables  direct comparison of the infinitesimal plumbings $F(z)G(w)=\tau$ and $zw=t$ of a node.    

\begin{thrm*}
Let $\piCB$ be a proper family of stable curves with a nodal fiber $\caC_b$ with a neighborhood of a node analytically equivalent to the Cartesian product of $\piVD$ and a parameter space $S$.  Consider the local coordinates $F(z)$ and $G(w)$, $F(0)=G(0)=0$, for neighborhoods of the inverse images of the node on the normalization of $\caC_b$.  For the plumbing $F(z)G(w)=\tau$  of $\caC_b$ contained in $\piCB$ and $\eta$ a section of $\caV$ on a neighborhood of $\caC_b$, the initial plumbing tangent evaluates as
\begin{multline*}
\big(\frac{\partial}{\partial \tau},\eta\big)\Big|_{\tau=0}\,=\\
\pi(F'(0)G'(0))^{-2}\big(-F'(0)G'(0)\mathbf f_{zw}(0,0,s)\,+\,\frac12F''(0)\mathbf f_w(0,0,s)\,+\,\frac12G''(0)\mathbf f_z(0,0,s) \big).
\end{multline*}
\end{thrm*}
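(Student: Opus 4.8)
The plan is to read the plumbing $F(z)G(w)=\tau$ as the model plumbing $uv=\tau$ in the coordinates $u=F(z)$, $v=G(w)$, and thereby reduce the pairing to the Laurent-coefficient formula of Lemma~\ref{plumderiv}. First I would restrict $\eta=\bff(z,w,s)\dzdw^2$ to the $\tau$-fiber, which I parametrize by $\xi$ with $u=\xi$, $v=\tau/\xi$, equivalently $z=F^{-1}(\xi)$ and $w=G^{-1}(\tau/\xi)$. On this fiber $dz/z=\phi(z)^{-1}\,d\xi/\xi$ and $dw/w=-\psi(w)^{-1}\,d\xi/\xi$, where $\phi(z)=zF'(z)/F(z)$ and $\psi(w)=wG'(w)/G(w)$ are the logarithmic-derivative factors, both equal to $1$ at the node; hence $\dzdw=\bigl(\phi(z)^{-1}+\psi(w)^{-1}\bigr)d\xi/\xi$, and the coefficient of $(d\xi/\xi)^2$ in $\eta$ is $H(\xi,\tau,s)=\bff(z,w,s)\bigl(\phi(z)^{-1}+\psi(w)^{-1}\bigr)^2$. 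This $H$ plays the role of the expression $4\bff$ in Lemma~\ref{plumderiv}.

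By the analogue of Lemma~\ref{plumderiv} for the model plumbing $uv=\tau$, the pairing equals the $-\pi/\tau$ multiple of the $\xi$-constant Laurent coefficient $[\xi^0]H$. Here the hypothesis $\eta\in\caV$ is indispensable: the vanishing residue $\bff(0,0,s)=0$ makes the value of $[\xi^0]H$ at $\tau=0$ equal to $4\bff(0,0,s)=0$, so that the $1/\tau$ prefactor produces no pole and the initial tangent evaluates as the finite limit $-\pi\,\partial_\tau[\xi^0]H\big|_{\tau=0}$.

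To compute this limit I would use that only $w=G^{-1}(\tau/\xi)$ depends on $\tau$, with $\partial_\tau w\big|_{\tau=0}=1/(\xi G'(0))$. The factor $1/\xi$ converts the constant-coefficient extraction into the $\xi^{1}$-coefficient, and through $z=F^{-1}(\xi)$, for which $[\xi^1]z=1/F'(0)$, this becomes a single $z$-derivative at the node; the two Jacobian factors together account for the $(F'(0)G'(0))^{-2}$ prefactor. The second-order Taylor data of $F$ and $G$ enter only through $\phi'(0)=F''(0)/2F'(0)$ and $\psi'(0)=G''(0)/2G'(0)$, that is, through the $z$- and $w$-derivatives of $\bigl(\phi^{-1}+\psi^{-1}\bigr)^2$ at the node. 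Collecting the leading contribution from $\bff_{zw}$ together with the cross terms $\tfrac12 F''(0)\bff_w$ and $\tfrac12 G''(0)\bff_z$ produced by these derivatives gives the displayed formula.

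The main obstacle is this bookkeeping rather than any conceptual difficulty: one must correctly write $\dzdw$ on the $\tau$-fiber through the logarithmic-derivative factors, and then carry the second-order data of $F,G$ through the Laurent extraction while keeping the normalizing constants straight. A reassuring internal check is the linear case $F(z)=cz$, $G(w)=dw$, where the plumbing becomes $zw=\tau/(cd)$ and the formula must reduce to $(cd)^{-1}$ times the model pairing of Lemma~\ref{plumderiv}; this pins down the $(F'(0)G'(0))^{-1}$ scaling of the leading $\bff_{zw}$-term, and the symmetric roles of the two branches then isolate the $F''(0),G''(0)$ corrections.
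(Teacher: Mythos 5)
Your reduction is the right one, and it is essentially the argument of the paper: the paper likewise evaluates the initial plumbing tangent by Lemma \ref{plumderiv} applied to the plumbing family in its own model coordinates $u=F(z)$, $v=G(w)$, the only difference being how the Laurent extraction is packaged.  The paper writes the relevant coefficient as the loop-integral pairing $\LP\big(\frac12(u\partial/\partial u-v\partial/\partial v),\eta\big)$, expands that vector field to first order in the $(z,w)$ coordinates, quotes the precomputed expansion (\ref{LPform}), and converts parameters by $\lim_{\tau\rightarrow0}t/\tau=(F'(0)G'(0))^{-1}$; you instead pull $\eta$ back to the $\xi$-annulus of the $\tau$-fiber and extract $[\xi^0]$ by the chain rule through $F^{-1}$ and $G^{-1}$.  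These are the same computation organized differently, and your structural points (only first-order data of the coordinate change and second-order data of $\eta$ enter; the vanishing residue is exactly what makes the limit finite) are correct.

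The gap is the final step, which you assert rather than perform: the bookkeeping does \emph{not} give the displayed formula.  Carrying out your plan with the paper's normalization $\Laur(\eta)=4\sum_m a_{mm}t^m$ of Definition \ref{defLaur} --- equivalently, with your $H$ standing in for $4\bff$ --- one finds
\begin{equation*}
\big(\frac{\partial}{\partial\tau},\eta\big)\Big|_{\tau=0}\,=\,\frac{-4\pi}{F'(0)G'(0)}\Big(\bff_{zw}(0,0,s)\,-\,\frac{F''(0)}{2F'(0)}\,\bff_w(0,0,s)\,-\,\frac{G''(0)}{2G'(0)}\,\bff_z(0,0,s)\Big),
\end{equation*}
so the cross terms carry denominators $F'(0)^2G'(0)$ and $F'(0)G'(0)^2$ rather than $(F'(0)G'(0))^2$, and the leading constant is $4\pi$, not $\pi$.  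Your own proposed linear-case check detects this if you actually evaluate it: for $F(z)=z$, $G(w)=w$ the displayed formula returns $-\pi\bff_{zw}(0,0,s)$, whereas Lemma \ref{plumderiv} gives $\lim_{t\rightarrow0}-\frac{\pi}{t}\Laur(\eta)=-4\pi\bff_{zw}(0,0,s)$.  A second check you should have imposed: the plumbing locus is unchanged under $(F,G)\mapsto(cF,c^{-1}G)$, so the answer must be invariant under that substitution; the expression above is invariant, while the displayed formula is not.  To be fair, this mismatch is a defect of the printed statement as much as of your write-up --- the paper's own proof steps, namely formula (\ref{LPform}) combined with the Jacobian limit, also produce the expression above --- but a complete proof must either land on the printed constants or identify them as inconsistent with Lemma \ref{plumderiv} and Definition \ref{defLaur}; yours does neither, because the decisive computation was never carried through.
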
 
The formula generalizes the standard fact that for the Deligne-Mumford compactification the normal sheaf to the divisor of noded curves is the direct sum of products of tangent lines at distinguished points on the normalized curves.

In Section 6 we present three applications for the vanishing residue sheaf. The first involves the action of the automorphism group of a stable curve on infinitesimal deformations.  The quotient of the action on the moduli tangent space provides a local model for the Deligne-Mumford compactification.  We apply the main theorem and give a detailed description of the automorphism action on the vanishing residue sheaf.  In the second application, we give a detailed discussion of plumbing an elliptic curve family to a $3$-pointed $\mathbb P^1$.  We describe the initial moduli cotangent space and the action of the elliptic curves and pointed-$\mathbb P^1$ involutions on moduli parameters and moduli cotangents.  In the third application, we apply a standard construction for plumbing an Abelian differential and compare two approaches for calculating the variation of period.               

 I would like to especially thank John Hubbard and Sarah Koch for valuable conversations.  

\section{The local geometry of $zw=t$.} \label{locgeom}    

For a complex manifold, we write $\mathcal O$ for the sheaf of holomorphic functions, $T$ for the holomorphic tangent sheaf, $\Omega$ for the holomorphic cotangent sheaf and $K$ for the canonical bundle, the determinant line bundle of $\Omega$. Equivalently $T$ is the sheaf of holomorphic vector fields and $\Omega$ the sheaf of holomorphic $1$-forms.  We introduce a singular fibration of a neighborhood $V$ of the origin in $\mathbb C^2$ over a neighborhood $D$ of the origin in $\mathbb C$.

\begin{defn}
For positive constants $c,c'$, with $V=\{|z|<c,|w|<c'\}$ and $D=\{|t|<cc'\}$, the family $\piVD$ is the singular fibration with projection $\pi(z,w)=zw=t$. 
\end{defn}

The differential of the projection $d\pi=zdw\,+\,wdz$ vanishes only at the origin; the $t=0$ fiber crosses itself at the origin.  For $t\ne 0$, solving for $z$ the fiber of $\pi$ is $(z,w)$ with $|t|/c'<|z|<c$ and for $t=0$, the fiber is the union of discs $(z,0)$ with $|z|<c$ and $(0,w)$ with $|w|<c'$ in $\mathbb C^2$.  The family $V$ over $D$ is a family of annuli degenerating to a one point union of a $z$ disc and a $w$ disc in the axes of $\mathbb C^2$.  Alternatively, $V$ over $D$ is a representative of the germ at the origin of the family of hyperbolas limiting to the union of coordinate axes. 
\begin{figure}[htbp] % float placement: (h)ere, page (t)op, page (b)ottom, other (p)age
  \centering
  % file name: C:/Documents and Settings/Scott/Desktop/MyFiles/CBMS/CBMSbook/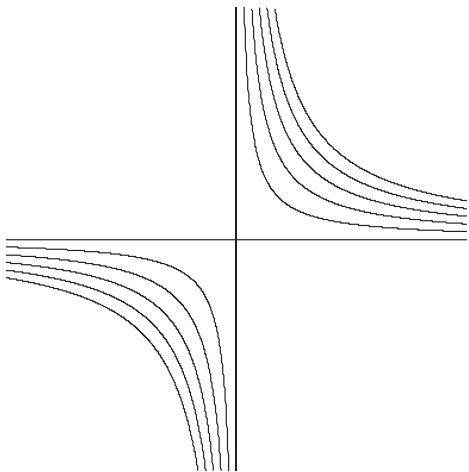
  \includegraphics[bb=0 0 556 556,width=2.5in,height=2.5in,keepaspectratio]{hyperb}
  \caption{The family of hyperbolas $zw=t$.}
  \label{fig:hyperb}
\end{figure}
The vector field 
\[
v\,=\,z\frac{\partial}{\partial z}-w\frac{\partial}{\partial w}\in T_V,
\]
is vertical on $V-\{0\}$, since $d\pi(v)$ vanishes. Let $v'$ be another vertical vector field, non vanishing on $V-\{0\}$.  Since on $V-\{0\}$, $\ker d\pi$ is rank one, it follows that $v'=fv$, for $f$ a section of $\mathcal O(V-\{0\})$.  By Hartog's Theorem \cite{Narbk}, $f$ is analytic on $V$ and since $v,v'$ are non vanishing on $V-\{0\}$, it follows that $f$ is non vanishing on $V$.  The observations provide that the condition {\em vertical vector field} defines a line bundle $\mathcal L$ over $V$ and $v$ represents a non vanishing section (the vector field $v$, a section of $\Omega_V$, vanishes at the origin; the corresponding line bundle section is non vanishing; $\mathcal L$ is not a sub bundle of $\Omega_V$).  In general, vertical vector fields that vanish at most in codimension $2$ correspond to non vanishing vertical line bundle sections. 

The $0$-fiber of $V$ over $D$ is an example of an open noded Riemann surface \cite[\S 1]{Bersdeg}, alternatively an open nodal curve.  The $0$-fiber is normalized by removing the origin of $\mathbb C^2$ to obtain a $z$ disc, punctured at the origin and a $w$ disc, punctured at the origin. The origins are filled in to obtain disjoint discs.  Analytic quantities on the 
$0$-fiber, lift to analytic quantities on the normalization.  The sheaf of regular $1$-differentials, \cite[\S 1]{Bersdeg}, equivalently the dualizing sheaf \cite{Bard}, \cite[Dualizing sheaves, pg. 82]{HMbook} associates to the $0$-fiber: Abelian differentials $\beta_z,\beta_w$ with at most simple poles respectively at the origin for the $z,w$ discs and the important residue matching condition $\res \beta_z\,+\,\res\beta_w=0$.  
Bardelli provides an exposition on the basics of families of stable curves \cite{Bard}.  He discusses the topology of fibers, the arithmetic genus, the dualizing and relative dualizing sheaves, line bundles and divisors, Riemann Roch, as well as K\"{a}hler differentials and first order deformation theory.

The fiber tangent spaces of $V-\{0\}$ over $D$ are subspaces of $T_V$.  The meromorphic differential
\[
\alpha\,=\,\frac{dz}{z}-\frac{dw}{w},
\]
is a functional on the fiber tangent spaces of $V-\{0\}$. For the $0$-fiber, it is immediate that $\alpha$ is a section of the dualizing sheaf.  The differential $\alpha$ satisfies the relations
\[
\alpha\wedge d\pi\,=\,2dz\wedge dw\qquad\mbox{and}\qquad\alpha(v)\,=\,2.
\]
The differential is uniquely determined modulo the submodule $\mathcal O(d\pi)\subset\Omega_V $  by each relation.  To  motivate the definition of the {\em relative dualizing sheaf} for $V$ over $D$, we consider the coset of $\alpha$ in $\Omega_V/\mathcal O(d\pi)$.  The finite pairing $\alpha(v)$ provides that the coset has holomorphic sections on $V-\{0\}$.  Let $\alpha'$ be another meromorphic differential on $V$ with $\alpha'\wedge d\pi$ holomorphic and non vanishing on $V$.  On $V-\{0\}$, the quotient sheaf $\Omega_V/\mathcal O(d\pi)$ is pointwise rank one and thus $\alpha'=f\alpha$ for $f$ holomorphic on the domain.  Again by Hartog's theorem \cite{Narbk}, the function $f$ is analytic on $V$.  The relation $\alpha'\wedge d\pi=f\alpha\wedge d\pi$ and non vanishing of the first quantity imply that $f$ is non vanishing on $V$.  In particular, the differentials $\alpha'$ with  $\alpha'\wedge d\pi$ non vanishing, considered as elements of $\Omega_V/\mathcal O(d\pi)$, define a line bundle over $V$; the line bundle is not a sub bundle of $\Omega_V$.  The differentials $\alpha'$ correspond to non vanishing holomorphic sections of the line bundle.  The constant relation $\alpha(v)=2$, shows that the line bundle is the dual of the vertical line bundle $\mathcal L$.  The relative dualizing sheaf $\omega_{V/D}$ is defined to have sections of the relative cotangent bundle $\coker(d\pi:\pi^*\Omega_D\rightarrow\Omega_V)$, given by differentials satisfying the polar divisor and residue conditions \cite[Sec. III]{Bard}, \cite[Dualizing sheaves, pg. 84]{HMbook}.  In particular, we have the sheaf equality $d\pi:\pi^*\Omega_D=\mathcal O(d\pi)$ and the quotient $\Omega_V/\mathcal O(d\pi)$ is the intended cokernel.  We have described the relative dualizing sheaf and shown that it is dual to the vertical line bundle $\mathcal L$.  We will use that for the families considered, the restriction of the relative dualizing sheaf to a fiber of the family is the dualizing sheaf of the fiber \cite[Prop. 3.6]{Bard}.  

Since the total space $V$ is smooth, there is a description of $\omega_{V/D}$ in terms of the canonical bundle $K_V$ and pullback $\pi^*K_D$ of the canonical bundle of $D$ \cite[Definition 3.5]{Bard}, \cite[Dualizing sheaves, pg. 84]{HMbook}; in particular
\begin{equation}\label{omegaK}
\omega_{V/D}\,\simeq\, K_V\otimes\pi^*K_D^{\vee},
\end{equation}
where $^{\vee}$ denotes the dual.  As above, a differential $\alpha'$, satisfying the polar divisor and residue conditions, determines a coset in $\Omega_V/\mathcal O(d\pi)$.  A differential $\alpha'$ and a non vanishing section $\beta$ of $\mathcal O(d\pi)$ together determine the element $\alpha'\wedge\beta\otimes\beta^{\vee}$ in $K_V\otimes\pi^*K_D^{\vee}$.  The association $\alpha'\operatorname{mod}\mathcal O(d\pi)\longleftrightarrow \alpha'\wedge\beta\otimes\beta^{\vee}$ is independent of the particular choice of $\beta$ and realizes the sheaf isomorphism (\ref{omegaK}).  

We are interested in general curve families $\piCB$ including nodal curves or equivalently analytic families of noded Riemann surfaces (possibly open) with smooth total space $\caC$ and base $\caB$.  We require that the projection $\Pi$ is a submersion on the complement of a codimension $2$ subset.  Families of open or compact Riemann surfaces and the above nodal family are included in the considerations.  For each node of a fiber, we assume that for a neighborhood the family $\piCB$ is analytically equivalent to a Cartesian product of the standard nodal family $\piVD$ and a complex manifold parameter space $S$.

The vertical line bundle $\caL$ and relative dualizing sheaf $\omCB$ are defined for the family $\piCB$.  The vertical line bundle is defined from $\ker d\Pi$ and the relative dualizing sheaf is $\omCB=\coker(d\Pi:\Pi^*\Omega_{\caB}\rightarrow\Omega_{\caB})$.  The positive powers $\omCB^k$ of the relative dualizing sheaf, alternatively the sheaf of regular $k$-differentials \cite{Bersdeg}, is defined to have sections $\eta$ of $\Omega_{\caC}^k/\Omega_{\caC}^{k-1}\otimes d\Pi(\Omega_{\caC})$ with at most order $k$ poles at the image of the nodes on the normalizations  of nodal fibers and the residue matching $\res\eta_z=(-1)^k\res\eta_w$ for the forms $\eta_z,\eta_w$ on the normalization.  The power $\omCB^k$ is dual to the power $\caL^k$ of the vertical line bundle.  The Cartesian product of $\piVD$ and $S$ provides a local model for the family geometry.  

We present an explicit isomorphism for the sheaf isomorphism
\[
\omCB\,\simeq K_{\caC}\otimes\Pi^*K_{\caB}^{\vee}.
\]
Restricting domains as necessary, let $\beta$ be a non vanishing section of the relative dualizing sheaf $\omCB$ and $\tau$ a non vanishing section of $K_{\caB}$.  Consider the association between sections $\psi$ of $\omCB^k$ and $k$-canonical forms for $\caC$, sections $\Psi$ of $K_{\caC}^k$ - the association is given by the formula
\begin{equation}\label{psiPsi}
\psi\,=\,\frac{\Psi}{(\beta\wedge\Pi^*\tau)^k}\,\beta^k.
\end{equation}  
Observations are in order.  On the submersion set for $\Pi$, a non vanishing section of 
$\omCB=\coker(d\Pi:\Pi^*\Omega_{\mathcal B}\rightarrow\Omega_{\mathcal C})$ and the pullback of a local frame for $\Omega_{\mathcal B}$ together form a frame for $\Omega_{\mathcal C}$.  
It follows that the product $\beta\wedge\Pi^*\tau$ is a non vanishing section of the canonical bundle $K_{\caC}$ on the submersion set.  With the codimension $2$ condition, the product is non vanishing in general - consequently the ratio $\Psi/(\beta\wedge\Pi^*\tau)^k$ is an analytic function on $\caC$.  The relation can be inverted to give a formula for $\Psi$ in terms of $\psi$.  In particular, the association provides a local isomorphism of sheaves. The right hand side depends on the choice of $\tau$, but is homogeneous of degree zero in $\beta$, and so is independent of the particular choice of $\beta$. The association establishes a 
canonical isomorphism between $\omCB^k$ and $K_{\caC}^k$ twisted by $\Pi^*K_{\caB}^k$.  The isomorphism 
\begin{equation}\label{omegaKC}
\omCB^k\simeq (K_{\caC}\otimes \Pi^*K_{\caB}^{\vee})^k 
\end{equation}
is a general form of the isomorphism (\ref{omegaK}).   Most important for our considerations, $k$-canonical forms present a local model for sections of powers of the relative dualizing sheaf, alternatively a model for families of regular $k$-differentials.

We use the local Cartesian product description of the family to define annuli in the fibers and annular Laurent expansions of sections of $\omCB^k$.  The annuli and expansion coefficients depend on the local analytic equivalence of $\piCB$ to a Cartesian product.  First, the fibers of $\piVD$, the annuli $\{|t|/c'<|z|<c\}$ or the one point union of $z$ and $w$ discs, define annuli or a nodal region in the fibers of $\piCB$.  
The differential $dz/z-dw/w$ is a non vanishing section of $\omVD$ and gives a non vanishing section of the relative dualizing sheaf for the Cartesian product. The annulus $A=\{|t|<|\zeta|<1|\}$ maps into the $t$-fiber of $V$ by $\zeta\rightarrow(\zeta,t/\zeta)$ or by $\zeta\rightarrow(t/\zeta,\zeta)$ with the differential $dz/z-dw/w$ pulling back to $\pm 2d\zeta/\zeta$.  The $2$ factor enters in comparing expansions in $(z,w)$ to expansions in $\zeta$.  In particular the differential $dz/z-dw/w$ has residues $\pm1$ when considered on the normalization, while the pulled back differential $\pm2d\zeta/\zeta$ has residue $\pm2$.  For all our considerations we define the residue on the normalization and the annular Laurent coefficients in terms of the pull back to the annulus $A$.  
A section $\eta$ of $\omVS^k$ is given as
\[
\eta\,=\,\mathbf f(z,w,s)\dzdw^k,
\]
since $(z,w,s)$ is a coordinate triple for $V\times S$.  In particular for the coefficient function given by a power series
\begin{equation}\label{fzws}
\mathbf f(z,w,s)\,=\,\sum_{m,n\ge 0}a_{mn}(s)z^mw^n,
\end{equation}
the pullback of the $k$-differential to $A$ for a $t$ nonzero fiber has the annular Laurent series 
\[
\mathbf f(\zeta,t/\zeta,s)\,=\,2^k\sum_{m,n\ge0}a_{mn}(s)\zeta^m(t/\zeta)^n
\]
with $\zeta$-constant term
\begin{equation}\label{zconst}
2^k\sum_{m\ge0}a_{mm}(s)t^m.
\end{equation}
The $0$-fiber of $V$ is the union of $\{w=0\}$ with coordinate $z$ and $\{z=0\}$ with coordinate $w$.  
The $\{w=0\}$ and $\{z=0\}$ restrictions of $\mathbf f$ are the functions
\[
\sum_{m\ge0}a_{m0}(s)z^m\qquad\mbox{and}\qquad\sum_{n\ge0}a_{0n}(s)w^n.
\]
The $\zeta$-constant term is long recognized as important in the analysis and variational theory of embedded annuli \cite{Gardtheta,Hejmono,HKmg,HSS, Kracusp,Msext,McM,HPZur,HPEin,Wlcbms,Wlcurv}.  The residue of the differential $\eta$ at the node of the $t=0$ fiber is simply the zeroth coefficient $a_{00}(s)$. The Laurent $\zeta$-constant term is important in the considerations of the following sections.  We concentrate on the square of the relative dualizing sheaf.  

\begin{defn}\label{defLaur}
For a section $\eta$ of $\omCB^2$ and a local analytic equivalence of $\piCB$ to a Cartesian product $\piVD$ and $S$, define the $\zeta$-constant coefficient 
\[
\Laur(\eta)\,=\,4\sum_{m\ge0}a_{mm}(s)t^m.
\]
\end{defn}\label{Laurdef}
  
We consider $\Laur$ as a sheaf map $\omCB^2\rightarrow\caO_{\caB}$ defined on open sets given as suitable Cartesian products. The sheaf map depends on the Cartesian product equivalence.  The value of $\Laur(\eta)$ at $t=0$ is the fourfold multiple of the residue of $\eta$ on the normalization - the residue is intrinsically defined.  

\section{The $(s,t)$ families and regular quadratic differentials.}

We describe the local deformation space, as well as, the associated family for a nodal stable curve, equivalently for a compact noded Riemann surface. The description includes the family tangent and cotangent coordinate frames.  The discussion closely follows \cite[Sections 2, 5 and 7]{Msext}, \cite[Section 3]{Wlcomp} and especially \cite[Section 2]{Wlhyp}. Similar treatments are found in \cite{LSY1,LSY2} and \cite{HKmg}.    

A Riemann surface with nodes $R$ is a connected complex space, such that every point has a neighborhood analytically isomorphic to either the unit disc in $\mathbb C$ or the germ at the origin of the intersection of the coordinate axes in $\mathbb C^2$.  The special points of $R$ are the nodes.  We write $\tilde R$ for the nodal complement $R-\{nodes\}$.  The normalization of $R$ is $\tilde R$ with the removed nodes considered as distinguished points.  Functions, line bundles and sections of line bundles on $R$ lift to corresponding quantities on $\tilde R$ with a matching condition.  The components of $\tilde R$ are the parts of $R$.  Provided $R$ is compact, each component of $\tilde R$ is described as a compact surface minus a finite number of points.  We now assume that each part has negative Euler characteristic.  In particular a noded Riemann surface is equivalent to a stable curve \cite{Bard,HMbook}.  

In the Kodaira-Spencer setup the infinitesimal deformation space of a compact complex manifold $M$ is the \v{C}ech cohomology $H^1(M,T^{\vee})$ \cite{Kod}. The infinitesimal deformation space of a pair $(R,q)$, a Riemann surface and a distinguished point, is $H^1(R,\caO(K^{-1}q^{-1}))$ for $K$ the canonical bundle and the inverse of the point line bundle $q$ ($\caO(q)$ is the line bundle with a section with divisor $q$).  By the Dolbeault isomorphism $H^1(R,\caO(K^{-1}q^{-1}))\simeq H_{\bar\partial}^{0,1}(R,\caE(K^{-1}q^{-1})),$  $\caE(K^{-1}q^{-1})$ the sheaf of smooth vector fields vanishing at $q$ \cite{Kod}.  The elements of the Dolbeault group are equivalence classes of $(0,1)$ forms with values in smooth vector fields vanishing at $q$ - smooth Beltrami differentials.   A Beltrami differential $\nu$ with support disjoint from $q$ represents a trivial infinitesimal deformation exactly when there is a smooth vector field $F$, vanishing at $q$, with $\bar\partial F=\nu$.   

By Kodaira-Serre duality the dual of the infinitesimal deformations is $H^0(R,\caO(K^2q))$, the space of holomorphic quadratic differentials with a possible simple pole at $q$.  In terms of a local coordinate $z$, the integral pairing for a Beltrami differential $\nu=\nu(z)\partial/\partial z\otimes d\bar z$ and a holomorphic quadratic differential $\phi=\phi(z)dz^2$ is  
\begin{equation}\label{Serrepair}
\int_R\nu(z)\phi(z)\,dE,
\end{equation}
for $dE$ the Euclidean area element in $z$.   Since holomorphic sections vanishing on an open set vanish identically, it follows that given an open set in $R$ there exists a basis for 
$H_{\bar\partial}^{0,1}(R,\caE(K^{-1}q^{-1}))$ of elements supported in the open set.

We describe local coordinates for the Teichm\"{u}ller space of $R$ \cite{Ahsome}.  Associated to a Beltrami differential of absolute value less than unity is a deformation of $R$.  Specifically given an atlas $\{(U_{\alpha},z_{\alpha})\}$ for the surface and a Beltrami differential, define new charts as follows: for $z$ the coordinate on $z_{\alpha}(U_{\alpha})\subset\mathbb C$ and $\nu(z)$ the local expression for $\nu$, let $w_{\alpha}=w^{\nu}(z)$ be a homeomorphism solution of the Beltrami equation $w^{\nu}_{\bar z}=\nu w^{\nu}_z$ on 
$z_{\alpha}(U_{\alpha})$ \cite{AB}. The new atlas $\{(U_{\alpha},w_{\alpha}\circ z_{\alpha})\}$ defines the new surface $R_{\nu}$.  Local holomorphic coordinates for the Teichm\"{u}ller space of $R-\{q_1,\dots,q_k\}$ are given as follows: choose $\nu_1,\dots,\nu_n$ compactly supported, spanning the Dolbeault group $H_{\bar\partial}^{0,1}(R,\caE((Kq_1\cdots q_k)^{-1}))$ for $s=(s_1,\dots,s_n)$ small, $\nu(s)=\sum_j s_j\nu_j$ satisfies $|\nu(s)|<1$ and $R_s=R_{\nu(s)}$ is a Riemann surface.  The assignment $s\rightarrow \{R_s\}$ for $|s|$ small, is a local coordinate for Teichm\"{u}ller space.  We will use that a chart $(U_{\beta},z_{\beta})$ for $R$ disjoint from the supports $\operatorname{supp}(\nu_j)$ is a chart for $R_s$; a fixed set of charts can be used for a neighborhood of the distinguished points.  

The initial variation of a family $R_s$ can be found as follows.  A one-parameter family of solutions of the Beltrami equation $w^{s\nu}_{\bar z}=s\nu w^{s\nu}_z$ has an initial variation $\frac{d}{ds}w^{s\nu}=\dot w[\nu]$  satisfying the $\bar\partial$-potential equation $\dot w[\nu]_{\bar z}=\nu$.  The initial variation $\dot w[\nu]$ is a vector field and for deformations of the pair $(R,q)$, vector fields vanish at $q$.    The data $\{(U_{\alpha},\dot w[\nu]\circ z_{\alpha})\}$ is a $0$-cocycle with values in $\caE(K^{-1}q^{-1})$.  Moreover solutions of the $\bar\partial$-potential equation are unique modulo holomorphic functions.  The \v{C}ech coboundary of $\{(U_{\alpha},\dot w[\nu]\circ z_{\alpha})\}$ is the deformation class in $H^1(R,\caO(K^{-1}q^{-1}))$.    

We are interested in the spaces of regular quadratic differentials for a proper family $\piCB$ of stable curves.  As above, the total spaces $\caC$ and $\caB$ are smooth; $\Pi$ is a submersion on the complement of a codimension $2$ subset; each part of a fiber has negative Euler characteristic.  For a sheaf $\caS$ on $\caC$, the direct image presheaf $\Pi_*\caS$ assigns to open sets $U$ in $\caB$ sections of $\caS$ on $\Pi^{-1}(U)$.  We review that the direct image $\Pi_*\omCB^2$ of the relative dualizing sheaf square is a locally free sheaf of the expected dimension - the direct image is the sheaf of holomorphic sections of a vector bundle.

First we observe that the dimension of the space of regular quadratic differentials on the fibers of $\piCB$ is constant.  The fibers of the family are compact noded Riemann surfaces; the distinguished points on the normalization of a fiber are inverse images of nodes.  A regular quadratic differential, equivalently a section of the dualizing sheaf square, is a meromorphic quadratic differential on each part with possible double poles at the inverse images of nodes and residues equal for each pair of nodal inverse images.  By Riemann Roch and the negative Euler characteristic hypothesis, for a part of genus $g$ with $n$ distinguished points the dimension of the space of quadratic differentials with possible double poles is $3g-3+2n$.  A node corresponds to a pair of distinguished points on the normalization and a pair of distinguished points contributes 
$4$ to the dimension of quadratic differentials before imposing the equal residue condition.   A node net contributes $3$ to the dimension; or equivalently each distinguished point on the normalization contributes a net $3/2$ to the dimension.  With this counting convention, the contribution to the dimension of regular quadratic differentials from a part of the normalization is $3g-3+3n/2$, the $-3/2$ multiple of the Euler characteristic of the part.  The Euler characteristic is additive - the sum of Euler characteristics of parts of a fiber coincides with the Euler characteristic of the general fiber. It follows that the dimension of the space of regular quadratic differentials on a fiber is a constant, equal to the $-3/2$ multiple of the Euler characteristic of the general fiber.  In the following, we write $\bg$ for the genus of the general fiber.

The following result on families of differentials is Proposition 5.1 of \cite{Msext} and Proposition 4.1 of \cite{HKmg}.  Masur proves the result by considering the sheaf of canonical forms on $\caC$ and considering the short exact sequences given by forming Poincar\'{e} residues onto successive hyperplane intersections with the family to define the fibers in $\caC$.  Direct image sheaves are formed.  Grauert's theorem \cite{Graucoh} is applied to show that the direct image sheaves are coherent.  Kodaira-Serre duality and a sequence chase are used to show that the Poincar\'{e} residue maps are surjective, his first result.  He applies the result to construct local sections of $\Pi_*\omCB$ with prescribed polar divisors.  A general  argument about products of $1$-forms is applied to find the desired families of regular quadratic differentials. We give a sketch of the much simpler Hubbard-Koch argument \cite{HKmg,HKlet}.    

\begin{lemma}\label{locfree} The direct image sheaf $\Pi_*\omCB^2$ is locally free rank $3\bg-3$.   A regular quadratic differential on a fiber of $\piCB$ is the evaluation of a section $\Pi_*\omCB^2$.
\end{lemma}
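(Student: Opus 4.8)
The plan is to obtain both assertions at once from Grauert's coherence theorem together with the analytic base-change (continuity) theorem. The strategy is: (i) verify that $\omCB^2$ is a coherent $\caO_{\caC}$-module that is flat over $\caB$; (ii) identify the restriction of $\omCB^2$ to a fiber $\caC_b$ with the square of the dualizing sheaf of $\caC_b$; and (iii) invoke the constancy of $b\mapsto\dim H^0(\caC_b,(\omCB^2)|_{\caC_b})$ established in the paragraph above. Granting these, Grauert's theorem \cite{Graucoh} gives that the direct image $\Pi_*\omCB^2$ is coherent, and the base-change theorem then upgrades the local constancy of the fiber dimension into local freeness of $\Pi_*\omCB^2$ together with the base-change isomorphism, which is precisely the evaluation statement.

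For step (i), since $\caC$ and $\caB$ are smooth and every fiber of $\Pi$ is purely one-dimensional, the equidimensionality criterion (miracle flatness) shows that $\Pi$ is flat; in particular flatness holds across the codimension-two nodal locus, where $\Pi$ fails to be a submersion. Because the total space $\caC$ is smooth, the isomorphism (\ref{omegaKC}) exhibits $\omCB^2$ as a line bundle on $\caC$, hence as an $\caO_{\caC}$-flat sheaf; composing with flatness of $\Pi$ gives that $\omCB^2$ is flat over $\caB$.

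For step (ii), I would use that the restriction of the relative dualizing sheaf to a fiber is the dualizing sheaf of that fiber \cite[Prop. 3.6]{Bard}; squaring, $(\omCB^2)|_{\caC_b}$ is the square of the dualizing sheaf of $\caC_b$, whose global sections are exactly the regular quadratic differentials on $\caC_b$. Combined with the Euler characteristic computation above, one has $\dim H^0(\caC_b,(\omCB^2)|_{\caC_b})=3\bg-3$ independently of $b$. The base-change theorem then yields that $\Pi_*\omCB^2$ is locally free of rank $3\bg-3$ and that for each $b$ the natural map $(\Pi_*\omCB^2)_b\otimes\mathbb C\rightarrow H^0(\caC_b,(\omCB^2)|_{\caC_b})$ is an isomorphism; surjectivity of this map is the assertion that every regular quadratic differential on a fiber is the evaluation of a section of $\Pi_*\omCB^2$.

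The main obstacle is verifying these hypotheses at the nodal locus, and in the analytic rather than algebraic category, instead of taking the conclusions as formal. Specifically, one must ensure that flatness and the fiber-restriction identification persist across the codimension-two node loci, where the local model is the Cartesian product of $\piVD$ and a parameter space $S$, and that the analytic versions of Grauert's coherence and base-change theorems apply to the proper map $\Pi$. Once the local product structure is invoked to reduce the delicate points to the model family $zw=t$, the remaining verification is routine, and the constancy of fiber dimensions established above supplies the single input that converts upper semicontinuity into local freeness.
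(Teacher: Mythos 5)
Your proposal is correct and takes essentially the same route as the paper's proof: both rest on a Grauert-type direct image theorem (the paper's ``Cartan-Serre theorem with parameters'') together with the constancy of the fiber dimension $h^0=3\bg-3$ to obtain local freeness, with the evaluation statement coming from the attendant base-change isomorphism. The only difference is bookkeeping: you verify flatness explicitly (miracle flatness plus invertibility of $\omCB^2$) and work at cohomological level zero, whereas the paper instead notes the vanishing of the higher cohomology so that all fiber cohomology dimensions are constant.
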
 
\begin{proof}  Begin with a proper analytic map $\piCB$ of analytic spaces and a coherent sheaf $\caF$ on $\caC$.  For a point $b\in\caB$, consider the fiber $\caC_b=\Pi^{-1}(b)$ and $\caF_{\caC_b}$ the coherent sheaf on $\caC_b$ whose stalk at $c\in\caC_b$ is the tensor product $\caF_c\otimes_{\caO_{\caC}}\caO_{\caC_b}$ of the stalk of $\caF$ with the structure sheaves of $\caC$ and $\caC_b$.  By a form of the Cartan-Serre theorem with parameters the direct image of $\caF$ on $\caB$ is a locally free sheaf provided the dimensions of the cohomology groups 
$H^n(\caC_b,\caF_{\caC_b})$ are constant.  For the sheaf $\omCB^2$ the higher cohomology groups vanish and from the above discussion the zeroth cohomology has constant rank $3\bg-3$.  The direct image sheaf is locally free of rank $3\bg-3$.  The second statement of the lemma follows since the dimension of regular quadratic differentials equals the rank of the direct image sheaf. 
\end{proof} 

The general fiber  of $\piCB$ is a smooth Riemann surface with restrictions of sections of $\Pi_*\omCB^2$ to a fiber giving $Q$, the space of holomorphic quadratic differentials on the fiber.  By hypothesis a neighborhood of each node in $\piCB$ is analytically equivalent to a Cartesian product with $\piVD$.  The pairing of the infinitesimal variation of the parameter $t$, $t\ne 0$, with $Q$ is fundamental to our considerations.  For the pairing (\ref{Serrepair}), the following lemma is given as formula 7.1 in \cite{Msext}, formula (4) in \cite{Wlcomp} and in Proposition 9.7 in \cite{HKmg}.  The proof is to give a quasiconformal map from a $t$-fiber of $\piVD$ to a $t'$-fiber, compute the initial $t'$-derivative of the Beltrami differential and compute the pairing (\ref{Serrepair}) with a quadratic differential on the $t$-fiber. The quasiconformal map can be given to preserve concentric circles and as a rotation near the boundary.

\begin{lemma}\label{plumderiv}
The pairing of the infinitesimal $t$-variation, $t$ nonzero, with a section $\phi$ of the direct image sheaf $\Pi_*\omCB^2$  on the $t$-fiber is
\[
\big(\frac{\partial}{\partial t},\phi\big)\,=\,\frac{-\pi}{t}\Laur(\phi).
\]
\end{lemma}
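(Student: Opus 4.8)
The plan is to compute the Serre pairing (\ref{Serrepair}) directly in the local model, representing the infinitesimal opening $\partial/\partial t$ by a Beltrami differential supported in the degenerating collar and then reading off the $\zeta$-constant Laurent coefficient by an angular integration.

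First I would fix the annular coordinate. Taking $c=c'=1$, the $t$-fiber is the annulus $A_t=\{|t|<|\zeta|<1\}$ with $z=\zeta,\ w=t/\zeta$, and by (\ref{Laurcoeff}) the restriction of $\phi$ is $\phi=4\mathbf f(\zeta,t/\zeta,s)(d\zeta/\zeta)^2$. Passing to the logarithmic coordinate $u=\log\zeta$, I would write $\phi=\Phi(u)\,du^2$ with $\Phi(u)=4\mathbf f(e^u,te^{-u},s)$, a function that is $2\pi i$-periodic in $u$. The crucial point is that integrating $\Phi$ over one angular period annihilates every Laurent mode except the $\zeta$-constant one, so that $\int_0^{2\pi}\Phi(x+iy)\,dy=2\pi\Laur(\phi)$ for every $x=\Re u$, using Definition \ref{defLaur} together with (\ref{zconst}). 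This is the step that injects $\Laur(\phi)$ into the answer.

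Next I would construct the quasiconformal identification $F_\epsilon\colon A_t\to A_{t'}$, $t'=t(1+\epsilon)$, as $F_\epsilon(\zeta)=\zeta\,e^{\epsilon\beta(\log|\zeta|)}$ for a smooth real profile $\beta$ with $\beta(0)=0$ and $\beta(\log|t|)=1$. This interpolates between the identity near the outer boundary $|\zeta|=1$ (matching the fixed $z$-disc) and the conformal rescaling $\zeta\mapsto(t'/t)\zeta$ near the inner boundary $|\zeta|=|t|$ (matching the fixed $w$-disc, since $w=t/\zeta$). Computing the Beltrami coefficient of $F_\epsilon$ in the $u$-coordinate and differentiating in $t'$ at $t'=t$, with $\epsilon=(t'-t)/t$, I would obtain the initial Beltrami differential $\dot\mu=\beta'(\Re u)/(2t)$; the holomorphic dependence on $t'$ confirms that this represents $\partial/\partial t$ and not its conjugate, and the fact that $\dot\mu$ depends only on $\Re u$ lets the pairing integral factor. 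Assembling everything, $\big(\frac{\partial}{\partial t},\phi\big)=\int_{A_t}\dot\mu\,\Phi\,dx\,dy$, the integral over the collar being legitimate because $F_\epsilon$ is conformal off the collar; the angular integration gives $2\pi\Laur(\phi)$ while the radial integration telescopes, $\int_{\log|t|}^{0}\beta'(x)\,dx=\beta(0)-\beta(\log|t|)=-1$, so that $\big(\frac{\partial}{\partial t},\phi\big)=\frac{1}{2t}\,(2\pi\Laur(\phi))\,(-1)=-\pi\Laur(\phi)/t$. I expect the profile $\beta$ to drop out entirely, which is exactly the cohomological invariance of the Serre pairing against the holomorphic differential $\phi$.

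The main obstacle, I expect, is not the integration but the bookkeeping that pins down the correct representative: verifying that $F_\epsilon$ genuinely carries $A_t$ onto $A_{t'}$ while respecting the marking and the fixed disc coordinates on both sides of the collar, and that the resulting class is exactly $\partial/\partial t$ with unit normalization, so that no spurious constant or stray factor of $2$ from the $dz/z-dw/w$ versus $2\,d\zeta/\zeta$ comparison sneaks in. Once the boundary normalization of $\beta$ is fixed as $0$ and $1$, the telescoping radial integral makes the value robust against the choice of interpolation, but getting that normalization right at the two collar boundaries is where the care is needed.
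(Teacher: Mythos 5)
Your proposal is correct and follows essentially the same route as the paper, which proves Lemma \ref{plumderiv} by exactly this recipe: a quasiconformal map from the $t$-fiber to the $t'$-fiber of $\piVD$ preserving concentric circles and acting as identity/rotation near the boundaries, followed by computing the initial $t'$-derivative of its Beltrami coefficient and evaluating the pairing (\ref{Serrepair}), where the angular integration isolates the $\zeta$-constant term $\Laur(\phi)$. The only cosmetic point is that your $F_\epsilon$ carries $A_t$ onto $A_{t'}$ only to first order in $\epsilon$ (since $e^{\epsilon}\neq 1+\epsilon$); replacing $\epsilon$ by $\log(1+\epsilon)$ in the exponent makes the map exact without changing the initial derivative $\dot\mu=\beta'(\Re u)/(2t)$ or the resulting value $-\pi\Laur(\phi)/t$.
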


We examine the geometric role of the $t$-scaling in the following sections. We now construct a general family for varying a nodal stable curve/a Riemann surface with nodes $\mathbf C$. In the next section, we describe the Hubbard-Koch Theorem for the universal property of the family.  Varying $\mathbf C$ is given as a combination of quasiconformal deformations of a relatively compact set in the normalization of $\mathbf C$ and gluings to copies of the family $\piVD$. The construction only uses that Beltrami differentials define finite deformations.  \v{C}ech style sliding deformations could equally be used in the construction in place of Beltrami differentials.

We start with the normalization of $\mathbf C$: a Riemann surface $R$ with formally paired distinguished points $\{a_k,b_k\}_{k=1}^m$ with corresponding local coordinates $z_k$ near 
$a_k$, $z_k(a_k)=0$, $w_k$ near $b_k$, $w_k(b_k)=0$ and with Beltrami differentials $\{\nu_j\}_{j=1}^n$ a basis for the cohomology group $H_{\bar\partial}^{0,1}(R,\caE(K^{-1}\prod_k a_k^{-1}b_k^{-1}))$.  We require that the domains of the local coordinates $\{z_k,w_k\}_{k=1}^m$ are mutually disjoint and that the union of local coordinate domains is disjoint from the union of Beltrami differential supports.  Let $\nu(s)=\sum_{j=1}^ns_j\nu_j,\,s\in\mathbb C^n,$ with $|s|$ small and let $R_s=R_{\nu(s)}$ be the deformation described above by quasiconformal maps.  The parameter $s$ is a local coordinate for the Teichm\"{u}ller space of $R$.  The coordinates $\{z_k,w_k\}_{k=1}^m$ are holomorphic for $R_s$, given the disjoint support hypothesis.  

We next plumb the family $R_s$ to $m$ copies of the family $\piVD$.  Choose a positive constant $c<1$, such that the local coordinate domains contain the discs $\{|z_k|<c\},\{|w_k|<c\},\, 1\le k\le m$.  For $|t|<c^4$, remove from $R_s$ the discs $\{|z_k|\le c^2\}$ and $\{|w_k|\le c^2\}$ to obtain an open surface $R^*_s$.  We now write $S$ for the connected domain of the parameter $s$ and $D$ for the disc $\{|t|<c^4\}$.  Boundary neighborhoods of individual fibers of 
$R_s^*\rightarrow S$ are glued to boundary neighborhoods of individual fibers of $\piVD$.  Given $(s,t)\in S\times D^m$, identify $p$ in the boundary neighborhood $\{c^2<|z_k(p)|<c\}\subset R_s^*$ to the point $(z_k(p),t_k/z_k(p))$ in the fiber of a $k^{th}$ factor of $\piVD$ and identify $q$ in the boundary neighborhood $\{c^2<|w_k(q)|<c\}\subset R_s^*$ to the point $(t_k/w_k(q),w_k(q))$ in the fiber of a $k^{th}$ factor of $\piVD$.  The result is a family of compact possibly noded Riemann surfaces $\piRS$ with projection - in brief the family given by a boundary neighborhood gluing of $R_s^*\rightarrow S$ and $m$ copies of the plumbing.  The plumbings determine labeled annuli in the fibers of the family.

\begin{figure}[htbp] % float placement: (h)ere, page (t)op, page (b)ottom, other (p)age
  \centering
  % file name: C:/Documents and Settings/Scott/Desktop/MyFiles/CBMS/CBMSbook/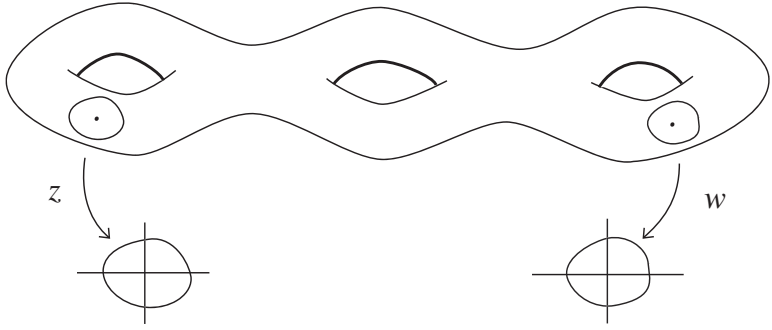
  \includegraphics[bb=0 0 527 222,width=4in,height=1.69in,keepaspectratio]{plumbdat}
  \caption{Plumbing data.}
  \label{fig:plumbdat}
\end{figure}

\begin{defn} 
The constructed family $\piRS$ is an $(s,t)$ family.
\end{defn}

The total space and base of an $(s,t)$ family are complex manifolds and the projection is  proper and a submersion except on the combined nodal set of codimension $2$.  By construction, a neighborhood of a node in an $(s,t)$ family is analytically equivalent to a Cartesian product of $\piVD$ and a complex manifold.  The dimension count for regular quadratic differentials provides for $\bg$ the genus of a smooth fiber that the dimensions satisfy $m+n=3\bg-3$.

\begin{lemma}\label{frameeval}
For an $(s,t)$ family $\piRS$, there is a neighborhood of the origin in $S\times D^m$ with sections $\phi_1,\dots,\phi_{3\bg-3}$ of the direct image sheaf $\Pi_*\omRS^2$ with the following pairings on $S\times\cap_k\{t_k\ne0\}$
\begin{align*}
\big(\frac{\partial}{\partial s_h},\phi_j\big)\,&=\,\delta_{hj},
&\big(\frac{\partial}{\partial s_h},\phi_{n+k}\big)\,&=\,0,\\
\Laur_k(\phi_h)\,&=\,0 \qquad\qquad\qquad\mbox{and}  &\Laur_k(\phi_{n+\ell})\,&=\,\frac{-\delta_{k\ell}}{\pi}
\end{align*}
for $1\le h,j\le n$, $1\le k,\ell\le m$ with $\delta_{**}$ the Kronecker delta and $\Laur_k$ the coefficient map for the $k^{th}$ annulus in a fiber.  On $S\times \cap_k\{t_k\ne0\}$ the restriction of the sections $\phi_1,\dots,\phi_n,t_1\phi_{n+1},\dots,t_m\phi_{n+m}$ to the fibers are the cotangents for the coordinates $(s,t)$.  
\end{lemma}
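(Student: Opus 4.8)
The plan is to obtain $\phi_1,\dots,\phi_{3\bg-3}$ as a dual basis to a distinguished collection of $3\bg-3=n+m$ linear functionals on the fibers of $\Pi_*\omRS^2$. By Lemma~\ref{locfree} the direct image is locally free of rank $3\bg-3$, so over a neighborhood $U$ of the origin in $S\times D^m$ I first fix a holomorphic frame $\psi_1,\dots,\psi_{3\bg-3}$; on each smooth fiber its restriction is a basis of the holomorphic quadratic differentials. The two families of functionals are the $s$--pairings $L_h=(\partial/\partial s_h,\cdot)$, $1\le h\le n$, computed by the Serre pairing (\ref{Serrepair}) with the Beltrami differential $\nu_h$ defining the $s_h$--direction, and the Laurent coefficients $\Laur_k$, $1\le k\le m$, of Definition~\ref{defLaur}. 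Assembling these into the $(n+m)\times(n+m)$ matrix $A(s,t)$ with rows $L_h(\psi_i)$ and $\Laur_k(\psi_i)$, I would define $\phi_j=\sum_i(A^{-1})_{ij}\psi_i$ wherever $A$ is invertible; there, identically, $L_h(\phi_j)=\delta_{hj}$, $L_h(\phi_{n+\ell})=0$, $\Laur_k(\phi_h)=0$ and $\Laur_k(\phi_{n+\ell})=\delta_{k\ell}$, and replacing $\phi_{n+1},\dots,\phi_{n+m}$ by their $-1/\pi$ multiples produces the normalization in the statement. Everything therefore reduces to showing that $A$ is holomorphic and invertible near the origin.

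The \emph{main obstacle} is the invertibility of $A$ at the origin, equivalently the linear independence of $\{L_h\}\cup\{\Laur_k\}$ on the space $Q_0$ of regular quadratic differentials of the central nodal fiber, where $\Laur_k$ restricts to four times the node residue $\res_k$. I would establish this through the residue exact sequence. Let $W\subset Q_0$ be the kernel of $\oplus_k\res_k\colon Q_0\to\mathbb C^m$. Lifting to the normalization $R$, a regular quadratic differential with vanishing node residues is precisely a quadratic differential with at most simple poles at the paired points $\{a_k,b_k\}$, so $W=H^0(R,\caO(K^2\prod_k a_kb_k))$. By Kodaira--Serre duality $W$ is dual to the deformation cohomology $H^1(R,\caO(K^{-1}\prod_k a_k^{-1}b_k^{-1}))$, of which $\{\nu_h\}_{h=1}^n$ was chosen as a basis, the duality being the pairing (\ref{Serrepair}); hence $\dim W=n$ and $L_1|_W,\dots,L_n|_W$ are a basis of $W^*$. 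The dimension count of the previous section gives $\dim Q_0=n+m$, so $\oplus_k\res_k$ is onto and the $\res_k$ are independent on $Q_0$. A relation $\sum_h c_hL_h+\sum_k d_k\res_k=0$ on $Q_0$, restricted to $W$, kills the residue terms and forces $c_h=0$; the surviving relation among the $\res_k$ then forces $d_k=0$. This block-triangular structure, with $\{L_h\}$ detecting $W$ and $\{\res_k\}$ detecting the quotient $Q_0/W$, is the conceptual heart of the argument.

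It remains to confirm holomorphy of $A$ across the nodal divisor. The entries $\Laur_k(\psi_i)$ are holomorphic throughout $U$ by Definition~\ref{defLaur}. For $L_h(\psi_i)$ the Beltrami differential $\nu_h$ is supported in the fixed relatively compact part of $R$, disjoint from every plumbing annulus, so the integral (\ref{Serrepair}) is taken over a region on which the family is a trivial deformation; its integrand depends holomorphically on $(s,t)$ and the integral extends holomorphically across $\{t_k=0\}$. Thus $A$ is holomorphic on $U$, and being invertible at the origin it is invertible on a smaller neighborhood $U'$. There the sections $\phi_j$ defined above are holomorphic sections of $\Pi_*\omRS^2$ whose pairings $L_h(\phi_j)$ and $\Laur_k(\phi_j)$ are, by construction, the constant Kronecker values, in particular on the dense locus $S\times\cap_k\{t_k\ne0\}$ where $L_h$ agrees with the genuine $s$--pairing.

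Finally, I would read off the cotangent assertion from Lemma~\ref{plumderiv}. For $1\le j\le n$ one has $(\partial/\partial s_h,\phi_j)=\delta_{hj}$ and $(\partial/\partial t_k,\phi_j)=\frac{-\pi}{t_k}\Laur_k(\phi_j)=0$, so $\phi_1,\dots,\phi_n$ are dual to $\partial/\partial s_1,\dots,\partial/\partial s_n$. For the scaled sections, $(\partial/\partial s_h,t_\ell\phi_{n+\ell})=0$ while, using Lemma~\ref{plumderiv} and $\Laur_k(\phi_{n+\ell})=-\delta_{k\ell}/\pi$,
\[
\big(\tfrac{\partial}{\partial t_k},\,t_\ell\phi_{n+\ell}\big)\,=\,t_\ell\cdot\frac{-\pi}{t_k}\Laur_k(\phi_{n+\ell})\,=\,\frac{t_\ell}{t_k}\delta_{k\ell}\,=\,\delta_{k\ell}.
\]
Hence $t_1\phi_{n+1},\dots,t_m\phi_{n+m}$ are dual to $\partial/\partial t_1,\dots,\partial/\partial t_m$, and $\phi_1,\dots,\phi_n,t_1\phi_{n+1},\dots,t_m\phi_{n+m}$ is the coordinate cotangent frame for $(s,t)$, as claimed.
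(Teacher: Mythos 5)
Your proposal is correct, and its skeleton is the same as the paper's: assemble a holomorphic $(n+m)\times(n+m)$ matrix of $s$-pairings and Laurent coefficients against a frame of $\Pi_*\omRS^2$, invert it near the origin, and read the cotangent statement off Lemma \ref{plumderiv}. The differences lie in two sub-steps, and they are instructive. For invertibility at the origin, the paper simply invokes Lemma \ref{locfree} to produce sections $\psi_1,\dots,\psi_{3\bg-3}$ whose matrix of pairings and Laurent coefficients is the identity at $(s,t)=(0,0)$; this tacitly assumes that the $n+m$ functionals admit a dual basis in the space of regular quadratic differentials on the central fiber, which is exactly the linear independence you prove via the residue sequence, Kodaira--Serre duality and the block-triangular restriction argument. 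Your version spells out what the paper compresses: the independence is built into the choice of $\nu_1,\dots,\nu_n$ as a basis of $H_{\bar\partial}^{0,1}(R,\caE(K^{-1}\prod_k a_k^{-1}b_k^{-1}))$, dual under (\ref{Serrepair}) to the simple-pole differentials, plus the dimension count $\dim Q_0=n+m$. For holomorphy of the $s$-pairings across $\{t_k=0\}$, the paper proves boundedness from the compact supports, holomorphy on the smooth locus by Teichm\"{u}ller theory, and then applies the Riemann extension theorem, whereas you argue holomorphy of the integrand directly over the fixed support region. Your route works, but two of its phrasings need care: the Beltrami representative of $\partial/\partial s_h$ at $s\ne0$ is $\mathbf L^{\nu(s)}\nu_h$ rather than $\nu_h$ itself, and the family over the Beltrami supports is trivial only in the $t$-directions (the complex structure there varies with $s$), so pointwise holomorphy of the pulled-back integrand rests on the Ahlfors--Bers holomorphic dependence of $w^{\nu(s)}$ on $s$. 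Neither point affects the conclusion; both routes deliver the same holomorphic, invertible matrix and hence the same frame.
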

\begin{proof}  We begin by showing that the pairings of sections of the direct image sheaf with the tangents $\partial/\partial s_h$ and the coefficient maps $\Laur_k$ are holomorphic on a neighborhood of the origin in $S\times D^m$.  The initial Riemann surface $R$ has atlas $\{(U_{\alpha},z_{\alpha})\}$ and the surface $R_s$ has atlas $\{(U_{\alpha},w_{\alpha}\circ z_{\alpha})\}$ for $w_{\alpha}=w^{\nu(s)}$  solutions of $w_{\bar z}=\nu(s)w_z$ on $z_{\alpha}(U_{\alpha})\subset\mathbb C$.  The composition rule for quasiconformal maps provides that on $w_{\alpha}\circ z_{\alpha}(U_{\alpha})$ the Beltrami differential for the infinitesimal variation $\partial/\partial s_h$ is
\[
\mathbf L^{\nu(s)}\nu_h\,=\,\biggr(\frac{\nu_h}{1-|\nu(s)|^2}\frac{w^{\nu(s)}_z}{\overline{w^{\nu(s)}_z}}\biggr)\circ(w^{\nu(s)})^{-1},\ \ \cite{Ahsome}.
\]
For $|s|$ small, the Beltrami differentials $\nu_h,\,\mathbf L^{\nu(s)}\nu_h,\,1\le h\le n$ are bounded with compact support on the surface $R$.  A section $\phi$ of the direct image sheaf $\Pi_*\omRS^2$ has uniformly bounded $L^1$ integral on the support of the Beltrami differentials in the fibers.  It follows that the pairings $(\partial/\partial s_h,\phi)$ are bounded.  By the basics of Teichm\"{u}ller theory the pairings are holomorphic 
on $S\times\cap_k\{t_k\ne 0\}$.  By the Riemann extension theorem, the pairings are consequently holomorphic on the domain of $\phi$ in $S\times D^m$.  The discussion of sections of $\Pi_*\omVD^2$ provides that the coefficient maps $\Laur_k(\phi)$ are holomorphic on the domain of $\phi$.
By Lemma \ref{locfree}, there are sections $\psi_1,\dots,\psi_{3\bg-3}$ with the given pairings and Laurent coefficients at the origin $(s,t)=(0,0)$.  From the above, the matrix of evaluating pairings and Laurent coefficients is holomorphic and the identity at the origin. The matrix inverse applied to the sections $\psi_1,\dots,\psi_{3\bg-3}$ gives the desired sections with the desired pairings and Laurent coefficients.  The statement about cotangents follows immediately from Lemma \ref{plumderiv}.  
\end{proof} 

For all applications of Lemma \ref{frameeval}, we restrict the base of the family to provide that the sections $\phi_1,\dots,\phi_{3\bg-3}$ and relations are defined for the family.  
\begin{defn}
For an $(s,t)$ family $\piRS$ and the direct image 
sheaf $\Pi_*\omCB^2$, referring to Lemma \ref{frameeval}, the sections 
$\phi_1,\dots,\phi_{3\bg-3}$ are the first $(s,t)$ frame and the sections $\phi_1,\dots,\phi_n,t_1\phi_{n+1},\dots,t_m\phi_{n+m}$ are the second $(s,t)$ frame.
\end{defn} 
  
\section{Cotangents to stable curve moduli.}

We consider proper families of stable curves $\piCB$ including nodal curves with $\caC$ and $\caB$ smooth manifolds.  We assume a neighborhood of each node is analytically equivalent to a Cartesian product of $\piVD$ and a complex manifold parameter space.  The Cartesian product structure provides that the locus of nodes is a disjoint union of codimension $2$ smooth subvarieties.  A section of $\omCB^2$ has a canonical residue at a node; the residue is computed on the normalization.

We consider the vanishing residue subsheaf $\caV$ of $\omCB^2$ and the associated exact sequence.  The direct image sheaf $\Pi_*\caV$ is shown to be locally free and the sequence of direct images is exact.  The Hubbard and Koch characterization and description of a universal family are applied to show that the direct image $\Pi_*\caV$ is the cotangent sheaf for an admissible family.  Applications include that first $(s,t)$ frames are frames for the log-cotangent sheaf of the divisor of noded curves and  second $(s,t)$ frames are coordinate cotangent frames.  

\begin{defn}
For a family $\piCB$ as above. On $\caC$, the locus $\bn_k,1\le k\le m,$ is the $k^{th}$ component of the loci of nodes and $\mathbb C_{\bn_k}$ is the skyscraper sheaf with fiber $\mathbb C$ on $\bn_k$ and $0$ fiber on the complement. The vanishing residue subsheaf $\caV$  of $\omCB^2$ is defined by the exact sequence
\begin{equation}\label{vanseq}
0\longrightarrow\caV\longrightarrow\omCB^2\stackrel{\oplus_k\res_k}{\longrightarrow}\oplus_k\mathbb C_{\bn_k}\longrightarrow 0,
\end{equation}
where $\res_k$ is the residue on $\bn_k$ and the $0$ map on the complement. 
\end{defn} 

Although we do not provide the general argument, the relative dualizing sheaf, vanishing residue subsheaf and exact sequence are natural for analytic families $\piCB$.  On the complement of nodes in a family $\piCB$, the sheaves $\caV$ and $\omCB^2$ coincide; $\caV$ generalizes the notion of families of holomorphic quadratic differentials.  The expansion (\ref{fzws}) for sections of $\omVS^k$ shows that at a node the stalk of $\caV$ has rank $2$ over the structure sheaf with generators $z$ and $w$ in terms of the local coordinate $(z,w,s)$; the sheaf $\caV$ is not locally free.

\begin{lemma}
For an $(s,t)$ family $\piRS$, the elements of the second $(s,t)$ frame of Lemma \ref{frameeval} are sections of $\caV$.  The direct image $\Pi_*\caV$ is a locally free sheaf with basis the second $(s,t)$ frame.  The sequence of direct images is exact
\[
0\longrightarrow\Pi_*\caV\longrightarrow\Pi_*\omRS^2\stackrel{\oplus_k\res_k}{\longrightarrow}\oplus_k\Pi_*\mathbb C_{\bn_k}\longrightarrow 0.
\]
\end{lemma}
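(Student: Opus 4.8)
The plan is to treat the three assertions in sequence, the common engine being Lemma \ref{frameeval} together with the identity $\res_k(\eta)=\tfrac14\Laur_k(\eta)|_{t_k=0}$ relating the residue on $\bn_k$ to the $\zeta$-constant coefficient (Definition \ref{defLaur} and the remark following it). I would first record two facts to be used repeatedly. First, $\Laur_k$ is $\caO_\caB$-linear: the pullback of a base function is constant along each annular fiber, hence constant in $\zeta$, so it passes through the $\zeta$-constant coefficient. Second, although Lemma \ref{frameeval} gives the frame identities only on $S\times\cap_k\{t_k\ne0\}$, each $\Laur_k(\phi_j)$ is holomorphic on the full neighborhood, so by the Riemann extension theorem $\Laur_k(\phi_h)\equiv0$ and $\Laur_k(\phi_{n+\ell})\equiv-\delta_{k\ell}/\pi$ on all of $S\times D^m$; evaluating at $t_k=0$ then gives $\res_k(\phi_h)=0$ and $\res_k(\phi_{n+\ell})=-\delta_{k\ell}/(4\pi)$.

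With these in hand, membership of the second frame in $\caV$ is a direct computation. The sections $\phi_h$, $1\le h\le n$, have vanishing residue at every node, so they lie in $\caV$. For $t_\ell\phi_{n+\ell}$, the $\caO_\caB$-linearity gives $\Laur_k(t_\ell\phi_{n+\ell})=-\delta_{k\ell}t_\ell/\pi$, and evaluating at $t_k=0$ yields $\res_k(t_\ell\phi_{n+\ell})=0$: for $k\ne\ell$ the Kronecker delta vanishes, while for $k=\ell$ the explicit factor $t_k$ vanishes on $\bn_k$. Hence every second-frame element is a section of $\caV$.

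The locally free statement is where the real content lies, since $\caV$ is itself not locally free. Using that the first frame $\phi_1,\dots,\phi_{3\bg-3}$ is a basis of $\Pi_*\omRS^2$ (Lemmas \ref{locfree} and \ref{frameeval}), I would write an arbitrary section $\phi$ of $\caV$ over $\Pi^{-1}(U)$, with $U$ a polydisc about the origin, uniquely as $\phi=\sum_j g_j\phi_j$ with $g_j$ holomorphic. The residue values above give $\res_\ell(\phi)=-\tfrac{1}{4\pi}g_{n+\ell}|_{t_\ell=0}$, so the vanishing residue condition forces $g_{n+\ell}$ to vanish on the smooth coordinate hypersurface $\{t_\ell=0\}$ and hence to be divisible by $t_\ell$, say $g_{n+\ell}=t_\ell\tilde g_\ell$. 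Then $\phi=\sum_{h\le n}g_h\phi_h+\sum_\ell\tilde g_\ell(t_\ell\phi_{n+\ell})$ expresses $\phi$ in the second frame, and the representation is unique because multiplication by $t_\ell$ is injective. Thus the second frame is an $\caO_\caB$-basis and $\Pi_*\caV$ is locally free of rank $3\bg-3$.

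Exactness then falls out of the explicit bases. In the first and second frames the inclusion $\Pi_*\caV\hookrightarrow\Pi_*\omRS^2$ is the diagonal map $\operatorname{diag}(1,\dots,1,t_1,\dots,t_m)$, which is injective, so the sequence $0\to\Pi_*\caV\to\Pi_*\omRS^2\to\coker\to0$ is exact by the definition of the cokernel. Its cokernel is $\bigoplus_\ell\caO_\caB/(t_\ell)$, and via the isomorphism $\Pi\colon\bn_\ell\to\{t_\ell=0\}$ the summand $\caO_\caB/(t_\ell)$ is identified with $\Pi_*\mathbb C_{\bn_\ell}$; the cokernel projection sends $\sum_jg_j\phi_j$ to $(g_{n+\ell}\bmod t_\ell)_\ell$, which by the residue computation is $-4\pi$ times $\oplus_k\res_k$. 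Hence the cokernel map agrees with the residue map up to a nonzero scalar, so the residue map is surjective with kernel exactly $\Pi_*\caV$, and the displayed sequence is exact. The steps I expect to require the most care are the holomorphic extension of the frame identities across the codimension-two nodal locus and the divisibility step, both isolated above.
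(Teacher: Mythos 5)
Your proposal is correct and takes essentially the same route as the paper: both expand a section of $\caV$ in the first $(s,t)$ frame, use the residue values $\res_k(\phi_j)=0$ for $j\ne n+k$ and $\res_k(\phi_{n+k})=-1/4\pi$ to force the coefficient $f_{n+k}$ to vanish on $\{t_k=0\}$ and hence be divisible by $t_k$, and then read off local freeness and exactness from the explicit frames. The only cosmetic differences are your packaging of exactness as a cokernel computation with the diagonal matrix $(1,\dots,1,t_1,\dots,t_m)$ versus the paper's direct surjectivity-plus-kernel check, and your citation of the Riemann extension theorem where the identity principle (constancy of the holomorphic functions $\Laur_k(\phi_j)$ off the dense open set $\cap_k\{t_k\ne 0\}$) is what is actually invoked.
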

\begin{proof}
The first statement follows from Lemma \ref{frameeval}, noting that the fourfold multiple of the residue at a node is  the Laurent coefficient evaluated at $t=0$.  By Lemma \ref{locfree}, $\Pi_*\omRS^2$ is locally free rank $3\bg-3$ and from Lemma \ref{frameeval}, the first $(s,t)$ frame elements are linearly independent on fibers, and thus are independent over $\caO_{S\times D^m}$.  To consider direct images, we work with presheaves on $S\times D^m$.  Given the inclusion $\Pi_*\caV\subset\Pi_*\omRS^2$, an element $\phi$ of $\Pi_*\caV$ is a linear combination $\phi=\sum_{j=1}^{3\bg-3}f_j\phi_j$ for $f_j\in\caO_{S\times D^m}$ and $\phi_j$ the first $(s,t)$ frame.  On the locus of the $k^{th}$ node $\bn_k,1\le k\le m,$ the residues $\res_k(\phi_j),j\ne n+k$ vanish and $\res_k(\phi_{n+k})=-1/4\pi$.  
It follows that $f_{n+k}$ vanishes on the divisor $\Pi(\bn_k)$.  The coordinate $t_k$ vanishes to order one on the divisor; it follows that $f_{n+k}=t_k\tilde f_{n+k}$ for $\tilde f_{n+k}\in\caO_{S\times D^m}$.  The observation applies for each node and consequently $\phi$ is a combination over $\caO_{S\times D^m}$ of the second $(s,t)$ frame.  The direct image of $\caV$ is consequently locally free with basis the second $(s,t)$ frame.  

The first and second $(s,t)$ frames are respectively frames over $\caO_{S\times D^m}$ for sections of $\omRS^2$ and $\caV$ on $\Pi^{-1}(S\times D^m)$.  The constant section $1$ is a frame for $\mathbb C_{\bn_k}$ over $\caO_{S\times D^m}$ on $\bn_k$.  By Lemma \ref{frameeval}, the $(n+k)^{th}$ element of the first frame maps by the combined residue to the constant section $-1/4\pi$ of $\mathbb C_{\bn_k}$ and $0$ for the remaining skyscrapers.  The combined residue is surjective.  By the above argument, an element of the kernel of the combined residue map is a combination of second frame elements and the sequence is exact at $\Pi_*\omRS^2$.      
\end{proof} 

We use the work of Hubbard and Koch to relate $(s,t)$ families \cite{HKmg}.  Hubbard and Koch study proper flat families of nodal curves \cite[Definition 3.1]{HKmg}.
A proper surjective analytic map $\piCB$ of analytic spaces is a family of nodal curves provided at each point $\mathbf c\in\caC$, either $\Pi$ is smooth with one dimensional fibers or the family is locally isomorphic to the locus of $zw=f(s)$ in $\mathbb C^2\times S$ over $S$   with $f$ vanishing at the image of $\Pi(\mathbf c)$.  We work with the case of  first order vanishing of $f$, where families are locally analytically equivalent to the Cartesian product of $\piVD$ and a complex manifold \cite[Chapter 10, Proposition 2.1]{ACG2}.     
Families are further assumed to be $\Gamma$-marked, $\Gamma$ a multi curve on a reference surface  \cite[Definitions 5.1, 5.2 and 5.5]{HKmg}; a $\Gamma$-marking is a continuous section into a family of cosets of mapping classes from a reference surface to the fibers of the family.  They show in their Proposition 5.7 that a proper flat family $\piCB$ of stable curves has a $\Gamma$-marking if and only if there is a Hausdorff closed set $\caC'\subset\caC$ such that: i) each component of $(\caC-\caC')\cap\Pi^{-1}(b), b\in\caB$ is either an annulus or a one point union of discs (a degenerate annulus) and ii) $\Pi:\caC'\rightarrow\caB$ is a trivial bundle of surfaces with boundary.  The multi curve $\Gamma$ represents the annuli that can degenerate in the family.  

By construction $(s,t)$ families are proper flat with fibers stable curves with Hausdorff closet set the complement of the plumbing annuli.  An $(s,t)$ family is an example of their $\Gamma$-marked plumbed analytic family $\Pi:Y\G\rightarrow\caP\G$ \cite[Sections 8.2, 8.3]{HKmg}.  They also construct a standard model family $\Pi:X\G\rightarrow Q\G$ \cite[Definition 7.1, Section 7.1]{HKmg}. By their Theorem 9.11, the induced map from a neighborhood of the origin in the base of an $(s,t)$ family into $Q\G$ is a $\Gamma$-marking preserving topological equivalence of families.  
Their Theorem 10.1 provides that for an $(s,t)$ family $\piRS$, 
$\Gamma$-marked by plumbing annuli, there is a unique analytic map of $S\times D^m$ into $Q\G$ such that the $\Gamma$-marked family $\Pi:X\G\rightarrow Q\G$ pulls back to a $\Gamma$-marked family analytically equivalent to $\piRS$.  
In particular, $(s,t)$ families provide local analytic descriptions for the standard model family $\Pi:X\G\rightarrow Q\G$.  Furthermore, for a pair of $(s,t)$ families with a common fiber, there are neighborhoods of the base points and $\Gamma$-markings such that the families over the neighborhoods are uniquely $\Gamma$-marked equivalent under a biholomorphism. 
%The standard model family $\Pi:X\G\rightarrow Q\G$ is {\em Kuranishi} for each of its fibers \cite[pg. 188]{ACG2}.  

Basic considerations provide that standard model families $\Pi:X\G\rightarrow Q\G$ and $(s,t)$ families are locally {\em Kuranishi} \cite[Chapter 11, Section 4]{ACG2}.  A deformation 
 of a curve $\mathbf C$ is a family of noded curves $\piCB$ and basepoint $\mathbf b\in\caB$ with a prescribed analytic equivalence of $\mathbf C$ to the fiber $\Pi^{-1}(\mathbf b)$.  A deformation $\piCB$ of a curve $\mathbf C$ is Kuranishi provided for any deformation of the curve $\Pi:\caD\rightarrow\caE$ and basepoint $\mathbf e\in\caE$, that the family $\Pi:\caD\rightarrow\caE$ is locally at $\mathbf e$ the pullback of the family $\piCB$ locally at $\mathbf b$ by a unique morphism respecting the equivalences to $\mathbf C$.  Existence of a Kuranishi family for a curve is equivalent to stability of the curve \cite[Chapter 11, Theorem 4.3]{ACG2}.  In particular, a standard model family $\Pi:X\G\rightarrow Q\G$ with basepoint $\mathbf q\in Q_{\Gamma}$ is locally the pullback of a Kuranishi family.  At the same time, the Kuranishi family is locally $\Gamma$-marked by plumbing annuli.  The Hubbard and Koch Theorem 10.1 provides that the Kuranishi family is locally the pullback of the standard model family. In particular, the families are locally equivalent for neighborhoods of the designated fibers.   The local uniqueness property for the Kuranishi family transfers  to the standard model family.  Kuranishi families have many general properties \cite[Chapter 11, Section 4]{ACG2}.

\begin{defn}
An admissible family is a proper $\Gamma$-marked family $\piCB$ of stable curves with a neighborhood of each node analytically equivalent to a Cartesian product of $\piVD$ and a complex manifold.  
\end{defn}

Hubbard and Koch use their standard model family to define an analytic structure for the quotient of the augmented Teichm\"{u}ller space by local actions of mapping class groups \cite[Section 12]{HKmg}.  In Theorem 13.1, they show that the constructed space is analytically equivalent to the analytic form of the Deligne-Mumford compactification of the moduli space of Riemann surfaces.  In particular $(s,t)$ families provide local analytic descriptions for the local manifold covers of the Deligne-Mumford compactification.  

\begin{thrm}\label{main}
The vanishing residue sequence (\ref{vanseq}) is natural for admissible families.  The direct image of the vanishing residue sheaf is naturally identified with the cotangent bundle for admissible families.  
\end{thrm}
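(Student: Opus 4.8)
The plan is to prove the two assertions in order, reducing a general admissible family to the concrete $(s,t)$ families of Section 3 by means of the Hubbard--Koch universality and then transporting a canonical isomorphism constructed there. First I would establish naturality of the sequence (\ref{vanseq}). Every term is intrinsic: the relative dualizing sheaf $\omCB$ and its square are canonical, the node loci $\bn_k$ and the skyscrapers $\mathbb C_{\bn_k}$ are determined by the family, and---most importantly---the residue $\res_k$ computed on the normalization is coordinate independent, as recorded in Section \ref{locgeom}. Consequently, for a morphism of admissible families the pullback of (\ref{vanseq}) is again of the same shape, and the only point requiring care is that the residue maps commute with base change. This follows from the product structure near a node together with the coordinate independence of the residue, so that $\caV$ pulls back to $\caV$ and the whole sequence is natural.

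Next I would reduce the identification to $(s,t)$ families. By the Kuranishi property and Theorem 10.1 of Hubbard--Koch, an admissible family $\piCB$ is, locally near any fiber, the pullback of the standard model family, which in turn is locally described by an $(s,t)$ family $\piRS$. Since both $\Pi_*\caV$ and the cotangent sheaf $\Omega_{\caB}$ pull back functorially, it suffices to produce a canonical isomorphism $\Pi_*\caV\simeq\Omega_{\caB}$ for $(s,t)$ families and to check its independence of the presentation.

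For an $(s,t)$ family the isomorphism is read off from Lemma \ref{frameeval} together with the lemma establishing that $\Pi_*\caV$ is locally free with the second $(s,t)$ frame as basis. Combining Lemma \ref{plumderiv} with the pairings of Lemma \ref{frameeval}, on the smooth locus $S\times\bigcap_k\{t_k\ne 0\}$ the second frame elements pair with the coordinate vector fields by the Kronecker delta,
\[
\big(\tfrac{\partial}{\partial s_h},\phi_j\big)=\delta_{hj},\qquad \big(\tfrac{\partial}{\partial t_\ell},\,t_k\phi_{n+k}\big)=\delta_{k\ell},
\]
with the mixed pairings vanishing; that is, $\phi_1,\dots,\phi_n,t_1\phi_{n+1},\dots,t_m\phi_{n+m}$ are exactly the coordinate cotangents $ds_h,dt_k$. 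Because this pairing is the intrinsic moduli pairing of Serre duality against Kodaira--Spencer classes, the assignment of second frame elements to $ds_h,dt_k$ is, on the smooth locus, the classical identification of regular quadratic differentials with cotangents. I would then define the map over the whole base by the same assignment; both sheaves are locally free of rank $m+n=3\bg-3$, and the transition matrix is the identity on the dense smooth locus, so by the Riemann extension theorem the map extends to an isomorphism across the nodal divisor.

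The hard part, and the point I would treat most carefully, is canonicity across the nodal divisor. The plumbing coordinate $t_k$, and hence the frame element $dt_k$, is not itself canonical; a second presentation with coordinates $F(z),G(w)$ rescales it, as the plumbing comparison of Section 5 makes explicit through the factor $F'(0)G'(0)$. The key observation is that this ambiguity is an invertible holomorphic factor, so it rescales the frame but leaves the induced sheaf isomorphism unchanged. Concretely, two $(s,t)$ presentations of a common fiber are $\Gamma$-marked biholomorphic, and on the smooth locus both induce the same intrinsic moduli pairing; by density and Riemann extension the two isomorphisms agree across the divisor as well. Together with the naturality of (\ref{vanseq}) and the functorial pullback supplied by the Kuranishi property, these local isomorphisms glue to a single canonical identification $\Pi_*\caV\simeq\Omega_{\caB}$, natural for admissible families, which is the assertion of the theorem.
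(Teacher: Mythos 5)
Your proposal is correct and follows essentially the same route as the paper: naturality from the intrinsic characterizations of the dualizing sheaf, nodal loci and residue; reduction to $(s,t)$ families via the Hubbard--Koch universality; identification of the second $(s,t)$ frame with the coordinate cotangents on the dense locus $S\times\bigcap_k\{t_k\ne0\}$ via Lemmas \ref{plumderiv} and \ref{frameeval}; and extension across the nodal divisor by analyticity of sections of the locally free sheaf $\Pi_*\caV$, so that the frames transform by the Jacobian cocycle of the cotangent bundle. The paper's proof is exactly this argument, phrased as the frame relation $(\phi',t'\phi')=\frac{\partial(s',t')}{\partial(s,t)}(\phi,t\phi)$ holding first on the open set and hence, by analytic continuation, on the whole domain.
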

\begin{proof}
The proof is formal given the present considerations; we use simplified notation. We use local descriptions in terms of $(s,t)$ families.  For $\Gamma$-marked families $\piRS$ and  $\Pi':\caR'\rightarrow S'\times D'^{m'}$ with a common fiber, there is a 
biholomorphic equivalence of the families over neighborhoods of the common base point; 
restrict the families to the neighborhoods.  If the number of nodes on the common fiber is not maximal for the families then $m$ and $m'$ can be distinct.  The relative dualizing sheaf is characterized either by its dualizing property or as the dual of the vertical line bundle.  The nodal loci are characterized by self crossing of the fiber of the projection and the residue on the normalization is intrinsic.  A biholomorphism $F$ pulls back the second family to the first family.  The vanishing residue exact sequence for the second family pulls back to the vanishing residue exact sequence for the first family.  On the domain of $F$ the coordinate cotangents are related by
\[
(ds',dt')\,=\, \frac{\partial(s',t')}{\partial (s,t)}(ds,dt)^t
\]
for an analytic Jacobian.  By Lemma \ref{frameeval}, on the open set $S\times\cap_{k=1}^m\{t_k\ne 0\}$ the cotangents are represented by the second $(s,t)$ frames $(\phi,t\phi)$ for $(ds,dt)$ and $(\phi',t'\phi')$ for $(ds',dt')$.  On the open set the second frames satisfy the relation
\[
(\phi',t'\phi')\,=\,\frac{\partial(s',t')}{\partial (s,t)}(\phi,t\phi).
\]
The left and right hand sides independently give $3\bg-3$ analytic sections of $\Pi'_*\caV'$ and $\Pi_*\caV$.  The relation is consequently satisfied on the domain of $F$.  Since $(\phi',t'\phi')$ and $(\phi,t\phi)$ are local bases for the locally free sheaves $\Pi_*'\caV'$ and $\Pi_*\caV$ and $\partial (s',t')/\partial (s,t)$ is the cocycle for the cotangent bundle, it follows that the direct image of $\caV$ is analytically equivalent to the cotangent bundle.  The equivalence does not depend on the particular representation as an $(s,t)$ family. For an admissible family the equivalence is given by any local representation.
\end{proof}

For an admissible family the form for a local frame of $\Pi_*\caV$ is determined by general considerations.   Begin with the restriction map from the relative dualizing sheaf to the dualizing sheaf of a fiber.  By the dimension count leading to Lemma \ref{locfree}, for a noded fiber the space of sections of the dualizing sheaf square vanishing at nodes has codimension the number of nodes.  By the same count, for each node there is a local section of the dualizing sheaf square with unit residue at the node and vanishing at other nodes.  Now the product of a function defining the associated locus of nodal curves in the base and a section of the relative dualizing sheaf square with constant unit residue is a section of $\Pi_*\caV$ corresponding to the node.  As sections of $\Pi_*\caV$, the constructed products are locally independent and independent from sections with non trivial restrictions to the fiber.  The collection of sections provides a local frame for $\Pi_*\caV$.  We will see below that a section of $\Pi_*\caV$ vanishing on noded fibers is a conormal to the locus of noded curves.     

\begin{corollary}\label{cotanframe}
For an $(s,t)$ family the second $(s,t)$ frame is the coordinate cotangent frame. 
\end{corollary}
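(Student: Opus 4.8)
The plan is to deduce the corollary directly from Theorem \ref{main} together with the explicit pairings recorded in Lemma \ref{frameeval} and Lemma \ref{plumderiv}. By Theorem \ref{main}, the direct image $\Pi_*\caV$ is naturally identified with the cotangent bundle of the base, and this identification carries the moduli tangent-cotangent pairing to the natural pairing between tangent and cotangent vectors. Under this identification the coordinate cotangent frame $ds_1,\dots,ds_n,dt_1,\dots,dt_m$ is by definition the frame dual to the coordinate tangent frame $\partial/\partial s_1,\dots,\partial/\partial s_n,\partial/\partial t_1,\dots,\partial/\partial t_m$. Since the second $(s,t)$ frame is already known to be a local basis of the locally free sheaf $\Pi_*\caV$, it suffices to verify that it is the frame dual to the coordinate tangent frame; equivalently, that the matrix of pairings of the second frame against the $\partial/\partial s_h$ and $\partial/\partial t_k$ is the identity.

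First I would carry out this pairing computation on the open set $S\times\cap_k\{t_k\ne0\}$, where all fibers are smooth and both Lemma \ref{frameeval} and Lemma \ref{plumderiv} apply. The pairings of $\phi_1,\dots,\phi_n$ against $\partial/\partial s_h$ give the Kronecker delta by Lemma \ref{frameeval}, while their pairings against $\partial/\partial t_\ell$ vanish since $(\partial/\partial t_\ell,\phi_h)=(-\pi/t_\ell)\Laur_\ell(\phi_h)=0$. For the scaled sections $t_k\phi_{n+k}$ the pairing against $\partial/\partial s_h$ vanishes because $(\partial/\partial s_h,\phi_{n+k})=0$, while the pairing against $\partial/\partial t_\ell$ evaluates as $t_k(-\pi/t_\ell)\Laur_\ell(\phi_{n+k})=t_k(-\pi/t_\ell)(-\delta_{\ell k}/\pi)=\delta_{\ell k}$. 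The factor $t_k$ carried by the second frame precisely cancels the $t_\ell$ in the denominator of Lemma \ref{plumderiv}, producing a finite nonsingular pairing. Hence the full pairing matrix is the identity, and on this open set the second frame agrees with $(ds,dt)$.

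The main obstacle is the behavior across the noded locus $\{t_k=0\}$, where Lemma \ref{plumderiv} no longer applies and the $t$-scaling is degenerate, so the pairing computation cannot be performed there directly. Here I would appeal to the global content of Theorem \ref{main}: the identification of $\Pi_*\caV$ with the cotangent bundle is defined over the entire base, so both the second $(s,t)$ frame and the coordinate cotangent frame $(ds,dt)$ are analytic sections of the same locally free sheaf over all of $S\times D^m$. Having shown that they coincide on the dense open set $S\times\cap_k\{t_k\ne0\}$, the identity theorem for analytic sections of a locally free sheaf forces them to coincide everywhere. This identifies the second frame with the coordinate cotangent frame and upgrades the fiberwise statement at the end of Lemma \ref{frameeval} to an equality of sections over the full base.
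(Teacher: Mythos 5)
Your proposal is correct and follows essentially the paper's own route: the identification in Theorem \ref{main} is built precisely by matching the second $(s,t)$ frame to the coordinate cotangents $(ds,dt)$, using the pairing computations of Lemma \ref{frameeval} and Lemma \ref{plumderiv} on the open set $S\times\cap_k\{t_k\ne0\}$ and then extending across the nodal divisor by analyticity of sections of the locally free sheaf $\Pi_*\caV$. Your two steps --- the identity pairing matrix on the smooth locus and the analytic-continuation argument --- are exactly the ingredients the paper uses, so the corollary is read off in the same way.
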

 
To understand the description of the cotangent fibers, we review the isomorphism between locally free sheaves and analytic vector bundles. Associated to an analytic vector bundle $E$ over a connected complex manifold $M$ is the sheaf $\caO(E)$ of analytic sections.  The sheaf is locally free.  In particular if $E$ is trivial over an open set $U$, the space of sections over $U$ is isomorphic to $\caO(U)^r$ for $r$ the rank of $E$.  Conversely, if for an open cover $\{U_{\alpha}\}$ of $M$, the space of sections of a sheaf $\caE$ 
is given as locally free by isomorphisms $h_{\alpha}:\Gamma(U_{\alpha},\caE)\rightarrow\caO(U_{\alpha})^r$, then the cocycles $h_{\alpha\beta}=h_{\alpha}\circ h_{\beta}^{-1}$ define invertible maps of $\caO(U_{\alpha}\cap U_{\beta})^r$ with the compatibility condition $h_{\alpha\beta}\circ h_{\beta\gamma}=h_{\alpha\gamma}$ on $U_{\alpha}\cap U_{\beta}\cap U_{\gamma}$. The cocycle $\{h_{\alpha\beta}\}$ defines a rank $r$ vector bundle on $M$ \cite[Chap. II]{Wells}. 

Given a locally free sheaf $\caE$, the fibers of the corresponding vector bundle are described by an elementary sheaf construction.  For the fiber at $p$ in $M$, introduce the sky-scraper sheaf $\mathbb C_p$ supported at $p$; a complex number defines a section of $\mathbb C_p$ and $\mathbb C_p$ is an $\caO_M$ module by evaluating functions at $p$.  The vector bundle associated to $\caE$ has fiber at $p$ the cohomology group $H^0(M,\caE\otimes_{\caO_M}\mathbb C_p)$.  A germ at $p$ of a section of $\caE$ defines an element of the fiber by extending by zero for the complement of $p$.  Germs $\mathbf e$ and $\mathbf e'$ at $p$ of sections of $\caE$ define the same element of the fiber provided for representatives on a neighborhood of $p$, that $\mathbf e=f\mathbf e'$ for $f$ in $\caO(U)$ with $f(p)=1$ or equivalently $\mathbf e-\mathbf e'=h\mathbf e''$ for $\mathbf e''$ a representative of a germ of a section and $h$ in $\caO(U)$ with $h(p)=0$. 

An example of the fiber construction is for the rank one sheaf $\caO(p)$ for a point on a Riemann surface.  For germs of sections $\sigma$ and $\sigma'$ at $p$, each with divisor $p$, then the germs satisfy $\sigma=f\sigma'$ for $f$ a germ of $\caO$ at $p$.  In the $p$-fiber $H^0(\caO(p)\otimes_{\caO}\mathbb C_p)$, the class of $\sigma$ is the $f(p)$ multiple of the class of $\sigma'$. 

We apply the elementary sheaf construction to describe the fibers of the cotangent bundle of an $(s,t)$ family.  

\begin{lemma}\label{cotfib}
Let $\piRS$ be an $(s,t)$ family with second $(s,t)$ frame $\caF$ and $\caI_p\subset\caO_{S\times D^m}$, the ideal sheaf of a point in the base.  The cotangent space of $S\times D^m$ at $p$ is identified with $\spn_{\caO_{S\times D^m}}(\caF)/\spn_{\caI_p}(\caF)$.  There is a well defined pairing between Beltrami differentials supported away from the nodes on the fiber $\Pi^{-1}(p)$, the functionals $-\pi/t_k\Laur_k$ and the cotangent vectors at $p$ given by forming limits with representatives. 
\end{lemma}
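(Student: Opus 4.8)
The plan is to obtain both assertions from Theorem~\ref{main} and the elementary fiber construction, after which the pairing is recognized as the standard coordinate tangent--cotangent pairing extended across the nodal locus.

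First I would prove the identification of the cotangent space. By Theorem~\ref{main} the direct image $\Pi_*\caV$ is analytically equivalent to the cotangent bundle of $S\times D^m$, and by Corollary~\ref{cotanframe} the second $(s,t)$ frame $\caF$ is the coordinate cotangent frame, hence a local basis for the locally free sheaf $\Pi_*\caV$. The elementary construction reviewed above computes the fiber of the bundle at $p$ as $H^0\big(S\times D^m,\,(\Pi_*\caV)\otimes_{\caO_{S\times D^m}}\mathbb C_p\big)$; writing sections in the basis $\caF$ this fiber is $\spn_{\caO_{S\times D^m}}(\caF)/\caI_p\cdot\spn_{\caO_{S\times D^m}}(\caF)=\spn_{\caO_{S\times D^m}}(\caF)/\spn_{\caI_p}(\caF)$, which gives the asserted description of the cotangent space at $p$.

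Next I would identify the pairing functionals as the dual of the second frame. By Lemma~\ref{frameeval} one has $\Laur_k(\phi_h)=0$ and $\Laur_k(\phi_{n+\ell})=-\delta_{k\ell}/\pi$, so evaluating $-\pi/t_k\,\Laur_k$ on the second frame elements $\phi_h$ and $t_\ell\phi_{n+\ell}$ returns $0$ and $\delta_{k\ell}$; the Beltrami pairings supply the complementary Kronecker deltas, so on $S\times\cap_k\{t_k\ne0\}$ these functionals are exactly the coordinate tangents $\partial/\partial s_h$ and, via Lemma~\ref{plumderiv}, $\partial/\partial t_k$ paired against the cotangents. It then remains to define the pairing at $p$ by limits. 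For a representative germ $\phi$ of a cotangent class, a Beltrami differential $\nu$ supported away from the nodes pairs by $\int_{\Pi^{-1}(p)}\nu\,\phi|_{\Pi^{-1}(p)}$, which is finite because $\phi$ restricts on the support of $\nu$ to a holomorphic quadratic differential. Since $\phi$ is a section of $\caV$ its residue at the $k^{th}$ node vanishes, so $\Laur_k(\phi)|_{t_k=0}=0$, whence $\Laur_k(\phi)=t_k\,h$ with $h$ holomorphic and $-\pi/t_k\,\Laur_k(\phi)=-\pi h$ extends holomorphically across $t_k=0$; its value at $p$ defines the pairing.

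Finally I would check well-definedness on the quotient, which is the crux. Two representatives $\phi,\phi'$ of the same cotangent vector differ by $\sum_j g_j F_j\in\spn_{\caI_p}(\caF)$ with $g_j(p)=0$. For the $\Laur$ functionals, each $-\pi/t_k\,\Laur_k(F_j)$ is holomorphic by the preceding paragraph, hence bounded near $p$, so the difference of limits equals $\sum_j g_j(p)\big(-\pi/t_k\,\Laur_k(F_j)\big)=0$. For the Beltrami pairings, $(\phi-\phi')|_{\Pi^{-1}(p)}=\sum_j g_j(p)\,F_j|_{\Pi^{-1}(p)}=0$ since $g_j\circ\Pi$ vanishes on the whole fiber, so the integral is unchanged. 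The main obstacle is exactly this passage across the nodal locus: the naive functional $-\pi/t_k\,\Laur_k$ has an apparent pole at $t_k=0$, and it is essential that representatives be sections of $\caV$, whose vanishing residue furnishes the cancelling factor $t_k$ that makes both the limit and its independence of the representative hold.
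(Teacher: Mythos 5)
Your proposal is correct and takes essentially the same route as the paper: identify $\caF$ as the coordinate cotangent frame via Theorem \ref{main} and Corollary \ref{cotanframe}, apply the elementary sheaf description of a vector bundle fiber, and conclude by checking that elements of $\spn_{\caI_p}(\caF)$ pair to zero at $p$. The only difference is one of detail, not of method: the paper obtains the needed vanishing by citing the holomorphicity of the tangent--frame pairings from the proof of Lemma \ref{frameeval}, while you re-derive it for the Laurent functionals from the vanishing residue (divisibility of $\Laur_k$ by $t_k$) and for the Beltrami pairings by restricting to the fiber, which is the same mechanism underlying the paper's claim.
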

\begin{proof}
A frame for the cotangent sheaf of the $(s,t)$ family is given by the coordinate frame $\caF$.  The description of the cotangent fiber at $p$ comes from the above sheaf description of a vector bundle fiber.  From the proof of Lemma \ref{frameeval}, the pairings of the tangent functionals with $\caF$ are holomorphic on a neighborhood of the origin in $S\times D^m$.  It follows that elements in $\spn_{\caI(p)}\caF$ have pairings vanishing at $p$.  The conclusion follows. 
\end{proof}

We can describe the cotangent fiber at a point for the curve $\mathbf C$ by giving sections of the dualizing sheaf square and local coordinates at the nodes.  The normalization of $\mathbf C$ is a Riemann surface $R$ with formally paired distinguished points $\{a_k,b_k\}_{k=1}^m$. Assume that local coordinates $z_k$ near $a_k$, $z_k(a_k)=0$ and $w_k$ near $b_k$, $w_k(b_k)=0$ are given.  Consider $Q(R)$ the space of meromorphic quadratic differentials on $R$ with at most simple poles at the distinguished points $\{a_k,b_k\}_{k=1}^m$; $Q(R)$ is the cotangent space of the Teichm\"{u}ller space of $R$.  Given an $(s,t)$ family $\piRS$ containing a fiber isomorphic to $\mathbf C$, we describe sections of $\Pi_*\caV$ representing the cotangents at $\mathbf C$.  
By Lemma \ref{locfree}, each section $\beta$ of the dualizing sheaf square is the evaluation of a section $\tilde\beta$ of the direct image $\Pi_*\omRS^2$.
The $(s,t)$ family is equivalent to a family with local coordinates $\{z_k,w_k\}_{k=1}^m$ for plumbing data.  The coordinates define sheaf maps $\Laur_k:\omRS^2\rightarrow \caO_{S\times D^m}$.
By Lemma \ref{frameeval}, there are also sections $\tilde\beta_h$, $1\le h\le m$, of the direct image with $\zeta$-coefficients $\Laur_k(\tilde\beta_h)=\delta_{kh}$, $1\le k,h\le m$, for $\delta_{**}$ the Kronecker delta.  
For $\beta\in Q(R)$, the system of equations $\Laur_k(\tilde\beta+\sum_{h=1}^mb_h\tilde\beta_h)=0$, $1\le k,h\le m$, for functions $b_h\in\caO_{\caB}$, has a local solution.  
Solving the system defines a linear map depending on the data of local coordinates: for $\beta\in Q(R)$, associate $\caL_{\{z_k,w_k\}}(\beta)=\tilde\beta+\sum_{h=1}^mb_h\tilde\beta_h$, a section of $\Pi_*\caV$. Furthermore for $t_h=z_hw_h$, the products $-t_h\tilde\beta_h/\pi$, $1\le h\le m$, define sections of $\Pi_*\caV$ vanishing on the fiber isomorphic to $\mathbf C$.  By Lemma \ref{cotfib}, the pairings at $\mathbf C$ of the elements $\caL_{\{z_k,w_k\}}(\beta), -t_h\tilde\beta_h/\pi$, $\beta\in Q(R)$, $1\le h\le m$, with Beltrami differentials supported away from the nodes and with the infinitesimal $t_k$-variations are determined completely by the elements of $Q(R)$ and the local coordinates $\{z_k,w_k\}_{k=1}^m$.  We will see in Theorem \ref{compplum} that the cotangent vectors depend on the second order expansions of the local coordinates.
 
The local Cartesian product description at a node provides that for an admissible family $\piCB$, the locus $\caD\subset\caB$ of noded curves is a divisor with normal crossings. In general for a normal crossing divisor in a complex manifold $M$, there are local analytic coordinates $(z_1,\dots,z_{n+m})$ with $\caD=\cup_{k=1}^m\caD_k$ for $\caD_k=\{z_{n+k}=0\}$.  We are interested in three cotangent constructions associated to the divisor $\caD$ of noded curves.  The first is the dual of the normal sheaf of $\caD$, the conormal $N_{\caD}^{\vee}$, defined by the exact sequence
\[
0\longrightarrow N_{\caD}^{\vee}\longrightarrow\Omega_M\otimes\caO_{\caD}\longrightarrow\Omega_{\caD}\longrightarrow 0,
\]
for $\Omega_M$ the sheaf of $1$-forms on $M$ and $\Omega_{\caD}$ the sheaf of K\"{a}hler differentials for $\caD$ \cite{Bard}.  The conormal is the set of cotangent vectors to $M$ supported on $\caD$ and is the annihilator of the tangents to the intersection of $\caD$ components.  For example, at a point of $\cap_{k=1}^m\caD_k$, the conormal is the annihilator of the intersection $\cap_{k=1}^mT_{\caD_k}$ of tangent spaces.  At the same point, the conormal sections are generated by the differentials $dz_{n+k},1\le k\le m,$ and the K\"{a}hler differentials are generated by $dz_j, 1\le j\le n$.  The second cotangent construction is the log-cotangent bundle $\Omega_M(\log\caD)$ for a normal crossing divisor.  The log-cotangent is a subsheaf of the sheaf of meromorphic $1$-differentials with $\Omega_M(\log\caD) |_{M-\caD}$ the sheaf of analytic $1$-differentials.  At the above point of $\caD$, the sheaf is locally generated by the differentials $dz_j,\,dz_{n+k}/z_{n+k},\,1\le j\le n, 1\le k\le m$.  The log-cotangent bundle is locally free of rank $\dim M$.  The third cotangent construction, supported on $\caD$, is the space of cotangents represented on the normalizations of the stable curves by holomorphic quadratic differentials; the differentials are holomorphic at the nodal inverse images. In Theorem \ref{compplum}, we show that the space of such cotangents is the annihilator of the infinitesimal plumbing tangents.  

For an $(s,t)$ family $\piRS$ with parameters $(s_j,t_k), 1\le j\le n, 1\le k\le m,$ the divisor $\caD$ of noded curves is $\cup_{k=1}^m\caD_k$ with $\caD_k=\{t_k=0\}$.  The locus $\caD$ is the codimension $m$ locus with parameters $(s_j,0), 1\le j\le n$.  

\begin{corollary}\label{logpolframe}
For an $(s,t)$ family the log-cotangent bundle is generated by the first $(s,t)$ frame. The conormal of $\caD$ is generated by the elements $t_k\phi_k$ of the second $(s,t)$ frame. 
\end{corollary}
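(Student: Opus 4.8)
The plan is to deduce both assertions from the single identification furnished by Theorem \ref{main} and Corollary \ref{cotanframe}. Write $M=S\times D^m$ for the base, with coordinates $(s_1,\dots,s_n,t_1,\dots,t_m)$ and $\caD=\cup_{k=1}^m\caD_k$, $\caD_k=\{t_k=0\}$. By Corollary \ref{cotanframe} the second $(s,t)$ frame is the coordinate cotangent frame, so under the identification $\Pi_*\caV\simeq\Omega_M$ of Theorem \ref{main} we have $\phi_j\leftrightarrow ds_j$ for $1\le j\le n$ and $t_k\phi_{n+k}\leftrightarrow dt_k$ for $1\le k\le m$. First I would record the standard local descriptions recalled in the text: the log-cotangent sheaf $\Omega_M(\log\caD)$ is freely generated by $ds_j$ and $dt_k/t_k$, while the conormal $N_\caD^\vee\subset\Omega_M\otimes\caO_\caD$ is generated by the $dt_k$, $1\le k\le m$. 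The remaining work is to translate these generators through the frame correspondence.

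For the log-cotangent statement, I would extend the identification of Theorem \ref{main} from $\Pi_*\caV$ to $\Pi_*\omRS^2$. Since $\caV=\omRS^2$ away from the nodes, over $M-\caD$ the two direct images agree and $\Omega_M=\Omega_M(\log\caD)$; there the first and second frames are both bases and $\phi_{n+k}=t_k^{-1}(t_k\phi_{n+k})$ corresponds to $t_k^{-1}dt_k=dt_k/t_k$. The key point is that $dt_k/t_k$ is an honest section of $\Omega_M(\log\caD)$ over all of $M$, so the basis-to-basis assignment $\phi_j\mapsto ds_j$, $\phi_{n+k}\mapsto dt_k/t_k$ defines an isomorphism of the locally free rank $3\bg-3$ sheaf $\Pi_*\omRS^2$ (Lemma \ref{locfree}) onto $\Omega_M(\log\caD)$ which restricts over $M-\caD$ to the isomorphism of Theorem \ref{main} and is compatible with the inclusions $\Pi_*\caV\subset\Pi_*\omRS^2$ and $\Omega_M\subset\Omega_M(\log\caD)$. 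Hence the first $(s,t)$ frame is a frame for the log-cotangent bundle.

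For the conormal statement, I would restrict the identification $\Pi_*\caV\simeq\Omega_M$ to $\caD$, giving $\Pi_*\caV\otimes\caO_\caD\simeq\Omega_M\otimes\caO_\caD$. Under this restriction the sections $t_k\phi_{n+k}$ map to $dt_k$, which by the recalled local description generate $N_\caD^\vee$ inside $\Omega_M\otimes\caO_\caD$. Thus the conormal of $\caD$ is generated by the elements $t_1\phi_{n+1},\dots,t_m\phi_{n+m}$ of the second $(s,t)$ frame.

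The step requiring the most care is the extension of the Theorem \ref{main} identification across $\caD$: one must verify that the a priori meromorphic correspondence $\phi_{n+k}\leftrightarrow dt_k/t_k$, valid at first only on $M-\caD$, is in fact an isomorphism of locally free sheaves over all of $M$, which holds here precisely because the factor $t_k^{-1}$ is absorbed by the log pole and the ranks match at $3\bg-3=n+m=\dim M$. The reducible-divisor background needed as input — that $\Omega_M(\log\caD)$ is free on $ds_j,dt_k/t_k$ and that $N_\caD^\vee$ is generated by the $dt_k$ — is standard and is cited in the text to \cite{Bard}; the remainder of the argument is the bookkeeping of translating these generators through the frame correspondence.
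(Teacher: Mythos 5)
Your proposal is correct and takes essentially the same route as the paper: the paper's own proof is the one-line observation that the statements are direct consequences of Corollary \ref{cotanframe} and Lemma \ref{frameeval}, and your write-up is precisely that translation spelled out (second frame $\leftrightarrow$ coordinate cotangents, hence $\phi_{n+k}\leftrightarrow dt_k/t_k$ and $t_k\phi_{n+k}\leftrightarrow dt_k$ along $\caD$), combined with the standard local generators of $\Omega_M(\log\caD)$ and $N_{\caD}^{\vee}$ that the paper recalls immediately before the corollary. The extra care you devote to extending the identification of Theorem \ref{main} across $\caD$ as a basis-to-basis isomorphism is exactly the detail the paper leaves implicit, and it rests on facts already established there (the first frame is a frame for $\Pi_*\omRS^2$).
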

\begin{proof}
The statements are direct consequences of Corollary \ref{cotanframe} and Lemma \ref{frameeval}.
\end{proof}

\section{Comparing infinitesimal plumbings.}

We generalize the Laurent coefficient map of Definition \ref{Laurdef} for elements of $\omVD^2$. Evaluating the limit of the generalization will not require a special choice of coordinates.  The generalization enables comparison of infinitesimal plumbings and extends period expansions of quadratic differentials \cite{Gardtheta,Hejmono,HKmg,HSS, Kracusp,Msext,McM, HPZur,HPEin,Wlcbms,Wlcurv} to the limiting case of a nodal fiber. 

For the family $\piVD$, the vertical line bundle $\caL$ is dual to the relative dualizing sheaf $\omVD$.  For a section $\lambda$ of $\caL$, nonzero at the node, the adjoint residue value, defined modulo a sign, is the residue of the dual section $\lambda^{\vee}$ at the node.  The $t\ne0$ fibers of the family $\piVD$ are annuli.  For $\eta$ a section of $\omVD$ and $\gamma$ a loop in a fiber winding once around the annulus, then the integral $\int_{\gamma}\eta$ is the $2\pi i$ multiple of the zeroth Laurent coefficient of the restriction of $\eta$ to the fiber with the sign of the integral uniquely determined by the orientation of $\gamma$.

\begin{figure}[thb] % float placement: (h)ere, page (t)op, page (b)ottom, other (p)age
  \centering
  % file name: C:/Documents and Settings/Scott/Desktop/MyFiles/CBMS/CBMSbook/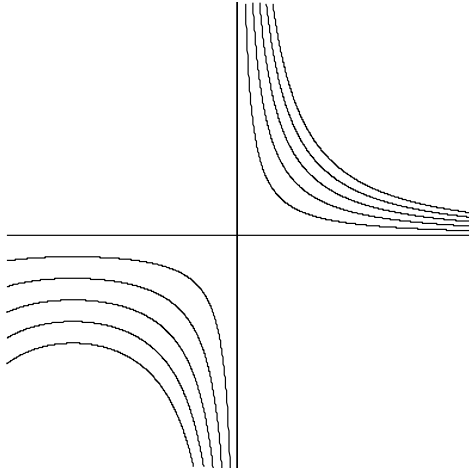
  \includegraphics[bb=20 118 575 673,width=2.5in,height=2.5in,keepaspectratio]{hyperb2}
  \caption{The family of hyperbolas $(z+.7z^2)w=t$. }
  \label{fig:hyperb2}
\end{figure}

\begin{defn} Given $\lambda$ a section of $\caL$ with adjoint residue value $\pm1$, a collection of loops $\gamma_t$ in the fibers $\pi^{-1}(t),t\ne0$, is admissible provided $\lim_{t\rightarrow0}\int_{\gamma_t}\lambda^{\vee}=4\pi i$.  For $\phi$ a section of $\omVD^2$ and $\lambda$ with admissible loops $\gamma_t$, the Laurent pairing is 
\[
\LP(\lambda,\phi)\,=\,\frac{1}{\pi i}\int_{\gamma_t}\,\lambda\phi.
\]
\end{defn}  
We consider properties of the pairing.  The product $\lambda\phi$ is a section of $\omVD$ and the integrals only depend on the homology classes of the loops.  The integrals do not depend on a choice of coordinates.  The pairing is analytic in $t$ and for $\phi$ a section of the vanishing residue sheaf $\caV$, then $\LP(\lambda,\phi)$ vanishes at $t=0$.  In particular for $\phi$ a section of $\caV$ the limit
\[
\lim_{t\rightarrow0}\frac{\LP(\lambda,\phi)}{t}
\]
exists.  

To find additional properties of the limit, we introduce expansions for $\lambda$ and $\phi$ and calculate.  For the local Cartesian product description $V\times S$, the considerations will only involve continuity in $S$ and so we omit $S$ dependence in the expansions.  We begin with a section of $\omVD$ and its dual section of $\caL$.  
From Section \ref{locgeom}, for an annulus $A$ with variable $\zeta$ mapped into a fiber of $V$ by $(\zeta,t/\zeta)$, then a $1$-differential $\eta=\mathbf f(z,w)(dz/z-dw/w)$ pulls back to 
$\mathbf{f}(\zeta,t/\zeta)2d\zeta/\zeta$ on $A$.  For a loop $\gamma$ in $A$ with $\int_{\gamma}d\zeta/\zeta=2\pi i$, the integral $\int_{\gamma}\mathbf{f}(\zeta,t/\zeta)2d\zeta/\zeta$ equals the $4\pi i$ multiple of the $\zeta$-constant of $\mathbf f$; see formula (\ref{zconst}).  Provided $\mathbf{f}(0,0)=1$, then in a neighborhood of the origin $(2\mathbf{f}(z,w))^{-1}(z\partial/\partial z\,-\,w\partial/\partial w)$ is an analytic section of $\caL$ with dual section $\eta$ satisfying the hypothesis $\lim_{t\rightarrow0}\int_{\gamma}\eta=4\pi i$.

We use the coordinate  $(z,w)$ for $V$ to give expansions for $\lambda$ a section of $\caL$ and $\phi$ a section of $\omVD^2$:
\[
\lambda\,=\,(1+az+bw+O_2)\frac12\big(z\frac{\partial}{\partial z}-w\frac{\partial}{\partial w}\big)
\]
and
\[
\phi\,=\,(cz+dw+ez^2+fzw+gw^2+O_3)\dzdw^2,
\]
where $O_r$ represents degree $r$ terms and higher in the variables.  The product has the expansion
\[
\lambda\phi\,=\,\big((ad+bc+f)zw+(ac+e)z^2+(bd+g)w^2+O_3\big)\dzdw
\]
and the pullback to the annulus $A$ has the expansion
\[
\lambda\phi\,=\,\big((ad+bc+f)t+O_1(\zeta)+O_1(1/\zeta)+O_2(t)\big)2\frac{d\zeta}{\zeta},
\]
and the Laurent pairing is
\begin{equation}\label{LPform}
\LP(\lambda,\phi)\,=\,4(ad+bc+f)t+O_2(t).
\end{equation}
In particular the coefficient of $t$ is determined by the first order expansion of $\lambda$ at the node and the second order expansion of $\phi$.  The first order expansion of the section $\lambda$ is determined by the restriction of $\lambda$ to the $t=0$ fiber.   In the particular case $\lambda=1/2(z\partial/\partial z-w\partial/\partial w)$ the Laurent pairing coincides with the Laurent coefficient, specifically $\LP(\lambda,\phi)=\Laur(\phi)$; compare the expansion (\ref{LPform}) to formula (\ref{zconst}).

In \cite{Wlcut} it is shown that the coefficient functionals $\phi\rightarrow\pi c(\phi)$, $\phi\rightarrow\pi d(\phi)$ are the evaluations of the {\em infinitesimal origin slidings} for the coordinates $z,w$ of the inverse images of the node on the normalization; the functionals are the action of the infinitesimal displacement of the location of possible poles for quadratic differentials on the normalization.  In the definition of sliding \cite[pgs. 405, 406]{Wlcut},  the integral is for a negatively oriented loop about the distinguished point and as noted on \cite[pg. 409]{Wlcut} the factor $i/2$ is omitted from the Beltrami-quadratic differential pairing; compare to the present formula (\ref{Serrepair}).  The infinitesimal plumbing formula 
below agrees\footnote[1]{There is a simple mistake in the referenced formula. On page 409 of the reference the function $G$ is normalized to $G'(0)=1$.  The ensuing calculation on page 411 refers to the normalized function.  In the statement of the Corollary the intended second derivative refers to the normalized derivative $(G'(0))^{-1}G''(0)$.} 
with the earlier formula \cite[Corollary]{Wlcut} obtained by constructing a two-parameter family interpolating between a pair of plumbings.

The $(s,t)$ coordinates depend on choosing plumbings.  The following formula gives the initial effect of a change of plumbing data.   To apply the formula for the second $(s,t)$ frame of Lemma \ref{frameeval}, note that in a neighborhood of the $k^{th}$ node, $t_k\phi_{n+k}=z_kw_k\phi_{n+k}$ for the second frame element $dt_k$.  

\begin{thrm}\label{compplum}
Let $\piCB$ be a proper family of stable curves with a nodal fiber $\caC_b$ with a neighborhood of a node analytically equivalent to the Cartesian product of $\piVD$ and a parameter space $S$.  Consider the local coordinates $F(z)$ and $G(w)$, $F(0)=G(0)=0$, for neighborhoods of the inverse images of the node on the normalization of $\caC_b$.  For the plumbing $F(z)G(w)=\tau$  of $\caC_b$ contained in $\piCB$ and $\phi$ a section of $\caV$ on a neighborhood of $\caC_b$, the initial plumbing tangent evaluates as
\begin{multline*}
\big(\frac{\partial}{\partial \tau},\phi\big)\Big|_{\tau=0}\,=\\
\pi(F'(0)G'(0))^{-2}\big(-F'(0)G'(0)\mathbf f_{zw}(0,0,s)\,+\,\frac12F''(0)\mathbf f_w(0,0,s)\,+\,\frac12G''(0)\mathbf f_z(0,0,s) \big)
\end{multline*}
for 
\[
\phi\,=\,\mathbf f(z,w,s)\dzdw^2
\]
and the given partial derivatives.
\end{thrm}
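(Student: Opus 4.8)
The plan is to compute the pairing through the Laurent pairing $\LP$ of Section~5, whose $t$-linear coefficient~(\ref{LPform}) isolates exactly the data appearing on the right. Writing $\phi=\mathbf f\dzdw^2$ with $\mathbf f=cz+dw+ez^2+fzw+gw^2+O_3$, the hypothesis $\phi\in\caV$ forces the residue $\mathbf f(0,0,s)$ to vanish; hence there is no constant term and $c=\mathbf f_z(0,0,s)$, $d=\mathbf f_w(0,0,s)$, $f=\mathbf f_{zw}(0,0,s)$, and by~(\ref{LPform}) these are the only Taylor coefficients of $\mathbf f$ that can survive.

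To the plumbing $F(z)G(w)=\tau$ I attach a vertical section as follows. With $Z=F(z)$ and $W=G(w)$, the form $\alpha_\tau=dZ/Z-dW/W$ is a section of the relative dualizing sheaf with adjoint residue $\pm1$; let $\lambda$ be the dual section of $\caL$, equipped with admissible loops. The mechanism of Lemma~\ref{plumderiv} then generalizes: the initial plumbing tangent is a fixed multiple of the limit at the node of $\LP(\lambda,\phi)$ divided by the plumbing parameter. Because $\LP(\lambda,\phi)=\frac{1}{\pi i}\int_{\gamma_t}\lambda\phi$ is an intrinsic period of the one-form $\lambda\phi$, it may be evaluated in whichever coordinatization is convenient; this is precisely what lets one treat $\phi$, a priori a section of the $t$-relative dualizing sheaf, consistently against the $\tau$-plumbing.

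To evaluate the period I would expand $\lambda$ to first order at the node. Working modulo the projection differential, using $F'(z)/F(z)=z^{-1}+\frac{F''(0)}{2F'(0)}+O(z)$ and its analogue for $G$, together with $dz\equiv\frac{z}{2}\alpha$ and $dw\equiv-\frac{w}{2}\alpha\pmod{\caO(d\pi)}$, one finds $\alpha_\tau=(1+\frac{F''(0)}{4F'(0)}z+\frac{G''(0)}{4G'(0)}w+O_2)\,\alpha$, so the first-order coefficients $a,b$ of $\lambda$ in the notation of~(\ref{LPform}) are governed by $F''(0)/F'(0)$ and $G''(0)/G'(0)$. Substituting $a,b$ and $c,d,f$ into~(\ref{LPform}), the surviving combination $ad+bc+f$ carries $f$ from the quadratic term, $F''(0)\mathbf f_w$ from $ad$, and $G''(0)\mathbf f_z$ from $bc$; the leading relation $\tau=F'(0)G'(0)\,zw+O_3$ supplies the parameter rescaling and, with the residue normalization of $\lambda$, produces the prefactor $(F'(0)G'(0))^{-2}$.

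The step I expect to be the main obstacle is neither conceptual nor analytic but the precise bookkeeping of constants. The numerical factor and the exact powers of $F'(0),G'(0)$ arise from the interaction of three separate rescalings --- normalizing $\lambda$ to adjoint residue $\pm1$, rescaling the plumbing parameter from $\tau$ to $t$, and the annular coordinate change --- and only their correct combination reproduces the stated prefactor. Keeping the frame change $\alpha\mapsto\alpha_\tau$, performed inside $\omVD$ modulo the projection differential, strictly separate from the holomorphic coordinate change $z\mapsto Z$, $w\mapsto W$, while propagating both through~(\ref{LPform}), is where the care is needed; the conceptual content is otherwise immediate from Section~5 and the expansion~(\ref{LPform}).
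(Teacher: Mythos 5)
Your plan is in fact the paper's plan: realize the pairing as $\lim_{\tau\rightarrow 0}-\pi\LP(\lambda,\phi)/\tau$ for $\lambda$ dual to $\alpha_\tau=dF/F-dG/G$, expand $\lambda$ to first order at the node, substitute into (\ref{LPform}), and rescale by $\lim_{\tau\rightarrow0}t/\tau=(F'(0)G'(0))^{-1}$. The divergence --- and the genuine gap --- is in the one substantive computation, the first-order expansion, and it is not a bookkeeping matter. Writing $\alpha=\dzdw$, your congruences $dz\equiv\frac{z}{2}\alpha$, $dw\equiv-\frac{w}{2}\alpha\pmod{\caO(d\pi)}$ are false: the exact identities are
\begin{equation*}
dz-\frac{z}{2}\,\alpha\,=\,\frac{1}{2w}\,d(zw),\qquad\qquad dw+\frac{w}{2}\,\alpha\,=\,\frac{1}{2z}\,d(zw),
\end{equation*}
so the required multiples of $d\pi$ are meromorphic, with poles along the nodal fiber; they are not sections of $\caO(d\pi)$. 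Discarding multiples of $d(zw)$ is harmless only when pulling back to loops inside the fibers $zw=t$. But Lemma \ref{plumderiv}, applied to the $\tau$-plumbing, requires the Laurent coefficient in the plumbing's own coordinates $(u,v)=(F(z),G(w))$, which is a period over loops in the fibers $F(z)G(w)=\tau$; on those fibers $zw$ is not constant, $d(zw)$ does not pull back to zero, and the discarded terms contribute at exactly the linear order being computed. Equivalently: the integrand of the period is $\mathbf f\cdot\alpha(\lambda)\cdot\alpha$ restricted to a $\tau$-fiber; your reduction correctly records $\alpha(\lambda)=1-\frac{F''(0)}{4F'(0)}z-\frac{G''(0)}{4G'(0)}w+O_2$, but it silently keeps $\alpha$ restricting to $2\,du/u$, which is true on $t$-fibers, whereas on the $\tau$-fiber the restriction is $\big(2-\frac{F''(0)}{2F'(0)}z-\frac{G''(0)}{2G'(0)}w+O_2\big)\frac{du}{u}$. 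The paper's proof avoids this trap by expanding on the two branches of the nodal fiber itself (where $w$ and $dw$, respectively $z$ and $dz$, vanish); that is an honest restriction of sections, it captures both effects at once, and it yields coefficients $\frac12F''(0)/F'(0)$, $\frac12G''(0)/G'(0)$ --- exactly twice yours.

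The factor of two is fatal, and a short example exhibits it. Take $F(z)=z$, $G(w)=w+w^2$, $\phi=z\,\dzdw^2$, so $\mathbf f_z=1$, $\mathbf f_w=\mathbf f_{zw}=0$ and $G''(0)/G'(0)=2$. Parametrizing $\{zG(w)=\tau\}$ by $z=u$, $w=G^{-1}(\tau/u)$, the one-form $\mathbf f\,\alpha(\lambda)\,\alpha$ pulls back to $(2-2\tau/u+O(\tau^{2}))\,du$, so $\LP(\lambda,\phi)=-4\tau+O(\tau^{2})$; the paper's coefficient $b=-G''(0)/2G'(0)=-1$ reproduces this through (\ref{LPform}), while yours, $b=-\tfrac12$, gives $\LP=-2\tau+O(\tau^{2})$. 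Carried through, your route therefore produces a formula in which the $F''(0)\mathbf f_w$ and $G''(0)\mathbf f_z$ terms enter with half the correct weight relative to the $\mathbf f_{zw}$ term. Beyond this, note that you defer the ``precise bookkeeping of constants'' and simply assert that the three rescalings combine to the stated prefactor; since the theorem is nothing but an exact formula, that deferral is itself a substantive gap --- and with the halved coefficients no amount of bookkeeping can close it.
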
  
\begin{proof}
The essential matter is a change of coordinates for a neighborhood of a node represented in two ways as a product with $V$.   If a neighborhood of the node is given as the Cartesian product of the plumbing $F(z)G(w)=\tau$ and a parameter space, then the initial plumbing tangent is evaluated by Lemma \ref{plumderiv} and a limit.  As noted above, for $(u,v)$ coordinates for $V$, the evaluation is equivalently given by the Laurent pairing with the section 
$1/2(u\partial/\partial u-v\partial/\partial v)$ of $\caL$ and a limit.  The section has dual section $\eta=du/u-dv/v$ of $\omCB$.  We now calculate the expansion of $\eta$ for a second description in terms of $V$ with coordinates $(z,w)$.  
In general for a change of variable $\zeta=f(z)$, $f(0)=0,$ then 
$d\zeta/\zeta=(1+(2f'(0))^{-1}f''(0)z+O_2)dz/z$.  As noted above, the first order expansion of $\eta$ is determined by its restrictions to the coordinate axes in $V$. On the $z$-coordinate axis, respectively $w$-coordinate axis, in $V$ the variable $w$, respectively $z$, and its differential vanish. Apply the general change of variable for the coordinate axes.  We have the following relation on the coordinate axes in $V$
\[
\big(\frac{du}{u}-\frac{dv}{v}\big)\,=\,\big(1+\frac12(F'(0))^{-1}F''(0)z+\frac12(G'(0))^{-1}G''(0)w+O_2\big)\dzdw.
\]
The dual section of $\caL$ satisfies
\[
\eta^{\vee}\,=\,\big(1-\frac12(F'(0))^{-1}F''(0)z-\frac12(G'(0))^{-1}G''(0)w+O_2\big)\frac12\big(z\frac{\partial}{\partial z}-w\frac{\partial}{\partial w}\big).
\]
We apply Lemma \ref{plumderiv} to find that the initial plumbing tangent is given as the limit
$\lim_{\tau\rightarrow 0}-\pi\LP(\eta^{\vee},\phi)/\tau$.  We apply formula (\ref{LPform}) and the observation $\lim_{\tau\rightarrow 0}t/\tau=\lim_{(z,w)\rightarrow(0,0)}zw(F(z)G(w))^{-1}=(F'(0)G'(0))^{-1}$ to obtain the final formula.  
\end{proof}

An easy application is the standard formula for the normal sheaf to the divisor $\caD$ of noded curves \cite[Props. 3.31, 3.32]{HMbook}.  The normal sheaf is the quotient of the tangent sheaf by the intersection of the tangent sheaves of components of $\caD$.  For an $(s,t)$ family $\piRS$ with parameters $(s_j,t_k),1\le j\le n, 1\le k\le m$, the intersection of components of $\caD$ has parameters $(s_j,0),1\le j\le n$.  The normal sheaf is the quotient of the $t$ parameter tangents modulo the $s$ parameter tangents.  The infinitesimal sliding deformations are represented by compactly supported Beltrami differentials on the normalization of a curve.  In particular, the infinitesimal slidings are tangents to the intersection of components of $\caD$. Theorem \ref{compplum} provides the relation 
\[
\frac{\partial}{\partial\tau}\,\equiv\, (F'(0)G'(0))^{-1}\frac{\partial}{\partial t}\mod\cap_{k=1}^m T_{\caD_k}
\]
between the plumbings $F(z)G(w)=\tau$ and $zw=t$ of a node.  The factor $(F'(0)G'(0))^{-1}$ is the cocycle for the tensor product of tangent lines to the normalized curve at the inverse images of the node.  In particular, the normal line is the product of tangent lines. 

Hyperbolic metrics provide a norm for the limit pairing $\lim_{t\rightarrow 0}\mathbf{LP}(\lambda,\phi)/t$ as follows.  For an $(s,t)$ family $\piRS$ with all $t$ parameters nonzero, the fibers are compact Riemann surfaces with uniformization hyperbolic metrics depending smoothly on the parameters.  For negative curvature metrics, there is a unique geodesic in each free homotopy class of closed curves.  The geodesic in the class of the $k^{th}$ annulus has length expansion
\[
\ell_k\,=\,\frac{2\pi^2}{\log1/|t_k|}\,+\,O((\log 1/|t_k|)^{-r}),
\]
locally uniformly in $s$ with $r=4$ for a single plumbing and $r=2$ in general \cite[Example 4.3]{Wlhyp}.  For a single plumbing, the length expansion provides that $e^{2\pi^2/\ell}\,=\,1/|t|(1+O((\log1/|t|)^{-2}))$.  We can define a norm for a single plumbing pairing by considering for $\lambda$ a section of $\caL$ and $\phi$ a section of $\Pi_*\caV$ the limit
\[
\lim_{t\rightarrow 0}e^{2\pi^2/\ell}\,|\mathbf{LP}(\lambda,\phi)|.
\]
For $\lambda$ with adjoint residue value $\pm1$ and $\phi$ vanishing on the noded fiber, the definition provides a norm for the conormal to the divisor of noded curves.

\section{Applications.}

\subsection{The moduli action of the automorphism group of a stable curve.}

We consider that $(s,t)$ families provide local manifold covers for the Deligne-Mumford compactification and describe the local actions of the relative mapping class groups.  We begin with the Hubbard-Koch discussion of markings \cite[Sections 2, 5]{HKmg}.  For $\Gamma$ a labeled multi curve on a compact reference surface $S$, let $\modd(S,\Gamma)$ be the group of mapping classes fixing the free homotopy classes of $\Gamma$ and the components of $S-\Gamma$.  Let $\modd_{ext}(S,\Gamma)$ be the extension of $\modd(S,\Gamma)$ of mapping classes permuting the $\Gamma$ free homotopy classes and components of $S-\Gamma$.  For $S/\Gamma$ the identification space given by collapsing $\Gamma$, let $\modd(S/\Gamma)$ be the group of isotopy classes of homeomorphisms of $S/\Gamma$ that fix each individual collapsed curve and fix the components of $S/\Gamma-\Gamma$.  Let $\modd_{ext}(S/\Gamma)$ be the extension of isotopy classes of homeomorphisms of $S/\Gamma$.  

For every multi curve subset $\Gamma'\subset \Gamma$, define $\operatorname{Homeo}(S,\Gamma,\Gamma')$ to be the homeomorphisms of $S-\Gamma'$, stabilizing each component of $S-\Gamma'$ and homotopic to a composition of Dehn twists about elements of $\Gamma-\Gamma'$.  Associated to an admissible family $\piCB$ are local families over $\caB$ of equivalence classes of maps from $S$ to the fibers of $\caC$.  In particular for $b\in\caB$, consider the maps of $S$ to $\Pi^{-1}(b)$ taking elements of a multi curve $\Gamma'$ bijectively to the nodes on the fiber.   A pair of such maps $\phi_1,\phi_2$ are {\em equivalent} provided for $ h\in\operatorname{Homeo}(S,\Gamma,\Gamma')$ that $\phi_1$ is homotopic to $ h\circ\phi_2$ on $S-\Gamma'$ by a homotopy of homeomorphisms from $S-\Gamma'$ to $\Pi^{-1}(b)-\{nodes\}$.  The set of equivalence classes of maps is $\markk_{\caC}^{\Gamma}(S;\caB)_b$.  The union 
$\markk_{\caC}^{\Gamma}(S;\caB)=\cup_{b\in\caB}\markk_{\caC}^{\Gamma}(S;\caB)_b$ inherits a topology by considering the compact-open topology for maps of $S$ into $\caC$.  The space $\markk_{\caC}^{\Gamma}(S;\caB)$ has a projection $p_{\Gamma}$ to $\caB$.  A $\Gamma$-marking for a family $\piCB$ is defined as a continuous section of the projection.  The mapping class group $\modd_{ext}(S,\Gamma)$ acts on $\Gamma$-markings by a right action of precomposition of maps.  In analogy to the local topological triviality of families of compact surfaces, Hubbard-Koch show that the projection $p_{\Gamma}$ has discrete fibers and a local section through each point \cite[Theorem 5.4]{HKmg}.  A direct characterization of $\Gamma$-markable families is provided \cite[Proposition 5.7]{HKmg}. 

Let $\Pi:X_{\Gamma}\rightarrow Q_{\Gamma}$ with $\Gamma$-marking $\phi_{\Gamma}$ be the Hubbard-Koch universal $\Gamma$-marked family.   
The natural map from a connected open set in the base of an $(s,t)$ family into the base $Q_{\Gamma}$ provides a $\Gamma$-marking preserving analytic equivalence of the $(s,t)$ family to the pullback family.  The universal property provides that a mapping class $\mathbf h\in\modd_{ext}(S,\Gamma)$ which acts by a right action by precomposition on $\Gamma$-markings, acts by a left action on the base $Q_{\Gamma}$ by an analytic equivalence $h_Q$ and on the total space by an analytic equivalence $h_X$  to give the family $h_Q\Pi h_X^{-1}:h_X(X_{\Gamma})\rightarrow h_Q(Q_{\Gamma})$ with marking $h_X\circ\phi_{\Gamma}\circ h^{-1}_Q$ \cite[Theorem 10.1]{HKmg}.  If each neighborhood of a basepoint $b$ of the family intersects its $h_Q$ translate, then $h_Q$ fixes $b$ and the mapping class $\mathbf h$ is realized by an automorphism acting on the $b$ fiber \cite[Proposition 2.6]{HKmg}.  Conversely, automorphisms of a fiber act by a left action on maps of $S$ to the fiber.  By discreteness and existence of $\markk^{\Gamma}_{X_{\Gamma}}(S;Q_{\Gamma})$ marking sections, automorphisms of a fiber of $\Pi:X_{\Gamma}\rightarrow Q_{\Gamma}$ locally act through the mapping class action. 

For a stable curve $\mathbf C$, consider a $\Gamma$-marked family $\Pi:X_{\Gamma}\rightarrow Q_{\Gamma}$ containing a fiber isomorphic to $\mathbf C$.  As described above, the automorphism group $\operatorname{Aut}(\mathbf C)$ acts on a neighborhood of the point in $Q_{\Gamma}$ representing $\mathbf C$.  

\begin{thrm}\textup{\cite[Proposition 12.2, Corollary 12.3]{HKmg},}\label{DM}
The Deligne-Mumford moduli space is an analytic orbifold.  A neighborhood of the stable curve $\mathbf C$ in the moduli space is locally modeled by the quotient $Q_{\Gamma}/\operatorname{Aut}(\mathbf C)$.
\end{thrm}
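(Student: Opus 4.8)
The plan is to use the Hubbard-Koch universal $\Gamma$-marked family $\Pi:X_{\Gamma}\rightarrow Q_{\Gamma}$, together with its local description by $(s,t)$ families, as analytic charts covering the moduli space, and then to identify the local identifications in the moduli space with the action of the finite group $\operatorname{Aut}(\mathbf C)$. The starting point is the established equivalence (Hubbard-Koch, Theorem 13.1, cited in the introduction) of the Deligne-Mumford compactification with the quotient of the augmented Teichm\"{u}ller space by the mapping class group, locally described by $\Gamma$-marked universal families. It therefore suffices to analyze the local structure of this quotient near the point representing $\mathbf C$, and the work is to show that this local structure is precisely a finite quotient.

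First I would record that $\operatorname{Aut}(\mathbf C)$ is finite: since each part of $\mathbf C$ has negative Euler characteristic, each part has finite automorphism group, and an element of $\operatorname{Aut}(\mathbf C)$ permutes the finitely many nodes and parts while restricting to a finite-order automorphism on each part. Next I would produce the action of $\operatorname{Aut}(\mathbf C)$ on a neighborhood of the point of $Q_{\Gamma}$ representing $\mathbf C$. By the discussion preceding the theorem, an automorphism of the fiber isomorphic to $\mathbf C$ acts on the marking space $\markk_{X_{\Gamma}}^{\Gamma}(S;Q_{\Gamma})$ and, by discreteness of the fibers of $p_{\Gamma}$ and the existence of local marking sections, is realized through a mapping class $\mathbf h\in\modd_{ext}(S,\Gamma)$. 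The universal property (Hubbard-Koch, Theorem 10.1) then yields an analytic equivalence $h_Q$ of $Q_{\Gamma}$, and since $\mathbf h$ is realized by an automorphism fixing the $\mathbf C$ fiber, Hubbard-Koch Proposition 2.6 gives that $h_Q$ fixes the basepoint. This exhibits $\operatorname{Aut}(\mathbf C)$ as a finite group of biholomorphisms of a neighborhood in $Q_{\Gamma}$ fixing the designated point.

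The main step is to identify the fibers of the projection from $Q_{\Gamma}$ to the moduli space with the orbits of this action. Two nearby points of $Q_{\Gamma}$ represent isomorphic stable curves precisely when the corresponding fibers are related by a marking-compatible isomorphism, that is, by a mapping class fixing the basepoint; by the previous step every such mapping class is realized through $\operatorname{Aut}(\mathbf C)$, and conversely each element of $\operatorname{Aut}(\mathbf C)$ produces such an identification. Hence, locally, the moduli equivalence relation is exactly the $\operatorname{Aut}(\mathbf C)$ action and the neighborhood of $\mathbf C$ is $Q_{\Gamma}/\operatorname{Aut}(\mathbf C)$. I expect this step to be the principal obstacle: ruling out a positive-dimensional family of spurious identifications rests on the discreteness of the fibers of $p_{\Gamma}$ (Hubbard-Koch, Theorem 5.4), while the converse --- that \emph{every} isomorphism between nearby fibers is induced by an element of the stabilizer --- relies on the properness and local-section properties of the marking bundle.

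Finally, I would assemble the charts $(Q_{\Gamma},\operatorname{Aut}(\mathbf C))$ with their quotient maps into an analytic orbifold. Transition maps between overlapping charts are analytic by the uniqueness clause of the universal property --- the uniquely $\Gamma$-marked biholomorphic equivalence of $(s,t)$ families sharing a common fiber recorded earlier in the excerpt --- and the finite group actions are compatible on overlaps. This exhibits the Deligne-Mumford moduli space as an analytic orbifold with the asserted local model $Q_{\Gamma}/\operatorname{Aut}(\mathbf C)$.
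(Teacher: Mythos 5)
The first thing to note is that the paper does not prove this statement at all: it is quoted directly from Hubbard--Koch (their Proposition 12.2 and Corollary 12.3), and the paragraph preceding it in the paper merely assembles the ingredients for the $\operatorname{Aut}(\mathbf C)$-action on $Q_{\Gamma}$ (via their Theorem 10.1 and Proposition 2.6). So there is no internal proof to compare against; your proposal is an attempted reconstruction of the Hubbard--Koch argument from the results the paper quotes. Your first two steps (finiteness of $\operatorname{Aut}(\mathbf C)$, and the realization of each automorphism as a biholomorphism $h_Q$ of a neighborhood in $Q_{\Gamma}$ fixing the basepoint) are sound and match what the paper records.

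The gap is in your main step, and it is exactly the point you flag as "the principal obstacle." You assert that two nearby points $q_1,q_2$ of $Q_{\Gamma}$ representing isomorphic curves are related by a mapping class $\mathbf h$ whose $h_Q$ fixes the basepoint, and you invoke the quoted Proposition 2.6 to realize $\mathbf h$ through $\operatorname{Aut}(\mathbf C)$. But Proposition 2.6, as quoted in the paper, has the hypothesis that \emph{every} neighborhood of the basepoint meets its $h_Q$-translate. A single pair $q_1,q_2$ in one fixed small neighborhood $U$ with $h_Q(q_1)=q_2$ gives only $h_Q(U)\cap U\neq\emptyset$ for that one $U$. To bridge this, you need a neighborhood $U$ so small that any class $\mathbf h$ with $h_Q(U)\cap U\neq\emptyset$ automatically satisfies the hypothesis for all neighborhoods --- that is, a local finiteness/proper discontinuity statement for the action of $\modd_{ext}(S,\Gamma)$ (modulo the Dehn twists about $\Gamma$, which act trivially on $Q_{\Gamma}$) near the point representing $\mathbf C$. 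That statement is precisely the substantive content of Hubbard--Koch's Section 12, i.e.\ of the very proposition being proved, and neither of the tools you cite supplies it: Theorem 5.4 (discreteness of fibers of $p_{\Gamma}$, existence of local sections) concerns the marking bundle over a \emph{single} family, not the group action on $Q_{\Gamma}$, and "properness of the marking bundle" is not among the quoted results. As written, the argument is therefore circular at the crucial point: the identification of the local moduli equivalence relation with the $\operatorname{Aut}(\mathbf C)$-orbits is deferred to results that do not contain it. The remaining assembly step (analytic transition maps via the uniqueness clause of Theorem 10.1) is fine once the local model is granted.
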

Let the normalization of the curve $\mathbf C$ be a Riemann surface $R$ with paired distinguished points $\{a_k,b_k\}_{k=1}^m$ with local coordinates $z_k$ near $a_k$, $z_k(a_k)=0$ and $w_k$ near $b_k$, $w_k(b_k)=0$.  The local coordinates define sheaf maps $\Laur_k:\omega^2_{X_{\Gamma}/Q_{\Gamma}}\rightarrow \caO_{Q_{\Gamma}}$.  
By Lemma \ref{frameeval}, there are sections $\tilde\beta_h$, $1\le h\le m$, of the direct image $\Pi_*\omega^2_{X_{\Gamma}/Q_{\Gamma}}$ with coefficients $\Laur_k(\tilde\beta_h)=\delta_{kh}$, $1\le k,h\le m$, for $\delta_{**}$ the Kronecker delta.
In the discussion following Lemma \ref{cotfib}, the cotangent space at $\mathbf C$ is described in terms of: $Q(R)$, the meromorphic quadratic differentials with at most simple poles at distinguished points, the local coordinates $\{z_k,w_k\}_{k=1}^m$ and the linear map $\caL_{\{z_k,w_k\}}$, extending sections of the dualizing sheaf square.  

\begin{corollary}\label{aut}
With the above conventions, $\operatorname{Aut}(\mathbf C)$ acts by a right action on the cotangent fiber at $\mathbf C$ by: the map $h_X$ pulls back $Q(R)$, pulls back the local coordinates $\{z_k,w_k\}_{k=1}^m$ and pulls back the linear map $\caL$. For $\operatorname{Aut}(\mathbf C)$ acting on local coordinates by permutations and multiplications by unimodular numbers, then $\operatorname{Aut}(\mathbf C)$ acts by: pulling back $Q(R)$, stabilizing the operator $\caL$ and on the cotangents $-t_k\tilde\beta_k/\pi$, $t_k=z_kw_k$, $1\le k\le m$ by permutations and multiplications by unimodular numbers.
\end{corollary}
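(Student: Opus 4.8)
The plan is to derive the action from the naturality of the cotangent identification in Theorem~\ref{main}, realizing it as the pullback $h_X^*$ on sections of $\caV$, and then to read off its effect on the explicit cotangent data $(Q(R),\{z_k,w_k\},\caL)$ introduced after Lemma~\ref{cotfib}.

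First I would fix the pullback. By Theorem~\ref{DM} a neighborhood of the point of $Q_\Gamma$ representing $\mathbf C$ is modeled by $Q_\Gamma/\operatorname{Aut}(\mathbf C)$, and each $\mathbf h\in\operatorname{Aut}(\mathbf C)$ is realized by the analytic equivalences $h_X$ of $X_\Gamma$ and $h_Q$ of $Q_\Gamma$ of \cite[Theorem 10.1]{HKmg}, with $h_Q$ fixing the base point of $\mathbf C$ and $h_X$ restricting to the given automorphism of the fiber $\mathbf C$. Since $h_X$ is an analytic self-equivalence of the family it preserves the relative dualizing sheaf square, the nodal loci and the intrinsic residue; so, by the naturality asserted in Theorem~\ref{main}, $h_X^*$ carries $\caV$ to itself and acts on $\Pi_*\caV$, realizing the cotangent action as the pullback on sections. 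Because pullbacks compose contravariantly this is a right action.

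Next I would trace $h_X^*$ through the three pieces of data. The restriction of $h_X$ to $\mathbf C$ lifts to an automorphism $\tilde h$ of the normalization $R$ permuting the paired points $\{a_k,b_k\}$; pullback by $\tilde h$ acts on $Q(R)$ and carries the local coordinates $\{z_k,w_k\}$ to $\{h_X^*z_k,h_X^*w_k\}$. The coordinate-dependent maps $\Laur_k$ of Definition~\ref{defLaur} are intertwined by $h_X^*$, up to the permutation of the node indices, with the corresponding maps in the pulled-back coordinates; combined with the uniqueness of the extension solving $\Laur_k(\,\cdot\,)=0$ supplied by Lemma~\ref{locfree} and Lemma~\ref{frameeval}, this gives the intertwining identity $h_X^*\big(\caL_{\{z_k,w_k\}}(\beta)\big)=\caL_{\{h_X^*z_k,h_X^*w_k\}}(\tilde h^*\beta)$. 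This is exactly the assertion that $h_X$ pulls back $Q(R)$, the coordinates and the operator $\caL$, which is the first statement. For the second statement I would then specialize to automorphisms acting on the chosen coordinates by a permutation $\sigma$ together with unimodular multiplications, $\tilde h^*z_{\sigma(k)}=u_kz_k$ and $\tilde h^*w_{\sigma(k)}=v_kw_k$ with $|u_k|=|v_k|=1$. The key point is that the relative dualizing section $dz/z-dw/w$ is invariant under such scalings, since $d(\alpha z)/(\alpha z)=dz/z$; hence the diagonal normalization defining $\Laur_k$ is unaffected and only the plumbing variable acquires a unimodular factor, $\tilde h^*t_{\sigma(k)}=u_kv_kt_k$. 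Consequently the vanishing conditions $\Laur_k(\,\cdot\,)=0$ are preserved and $\caL$ is stabilized. On the conormal generators $-t_k\tilde\beta_k/\pi$ from the discussion following Lemma~\ref{cotfib}, using that $\Laur_k$ at $t=0$ is the intrinsic residue and therefore coordinate independent, the index permutation and the unimodular factor in $t_k$ combine to send $-t_{\sigma(k)}\tilde\beta_{\sigma(k)}/\pi$ to a unimodular multiple of $-t_k\tilde\beta_k/\pi$ at the fiber $\mathbf C$; thus $\operatorname{Aut}(\mathbf C)$ permutes these cotangents and multiplies by unimodular numbers.

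The step I expect to be the main obstacle is the bookkeeping above: verifying that $\Laur_k$ and the composite operator $\caL$ transform by honest pullback under $h_X$, so that the naturality of Theorem~\ref{main} descends to the explicit parametrization of the cotangent fiber. The subtlety is that $\caL_{\{z_k,w_k\}}(\beta)$ is built from two auxiliary choices, the extension $\tilde\beta$ of Lemma~\ref{locfree} and the frame $\tilde\beta_h$ of Lemma~\ref{frameeval}, so I would first argue that $\caL_{\{z_k,w_k\}}(\beta)$ is independent of these choices, being the unique section of $\Pi_*\caV$ with the prescribed restriction and vanishing Laurent coefficients, and only then invoke uniqueness to obtain the intertwining identity regardless of the extensions and frames used on either side.
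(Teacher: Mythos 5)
Your strategy coincides with the paper's: invoke Theorem \ref{main} to realize the action of $\operatorname{Aut}(\mathbf C)$ on the cotangent fiber as pullback $h_X^*$ on $\Pi_*\caV$, establish the intertwining identity $h_X^*\caL_{\{z_k,w_k\}}(\beta)=\caL_{\{z_k\circ h_X,\,w_k\circ h_X\}}(\mathbf a^*\beta)$ to get the first statement, and then specialize to coordinates on which the automorphisms act by permutations and unimodular multiplications, where $\Laur_k$ is unchanged because the diagonal coefficients $a_{mm}z^mw^m=a_{mm}t^m$ are invariant under constant rescalings, to get the second. Two small points of comparison: the paper also records \emph{why} such coordinates exist (the cyclic stabilizers at the distinguished points can be linearized to act by rotations), which you omit but which the conditional phrasing of the second statement makes optional; and your parametrization $\tilde h^*z_{\sigma(k)}=u_kz_k$, $\tilde h^*w_{\sigma(k)}=v_kw_k$ excludes permutations interchanging $z_k$ with $w_k$ (e.g.\ the involution $\imath\subP$ of Section 6.2), though the argument is unaffected since $t_k=z_kw_k$ and $\dzdw^2$ are symmetric in the two branches.

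The one step that fails as you state it is the uniqueness claim you plan to lean on: $\caL_{\{z_k,w_k\}}(\beta)$ is \emph{not} the unique section of $\Pi_*\caV$ with prescribed fiber restriction and identically vanishing Laurent coefficients. Whenever $n\ge1$, adding $f\phi_j$ for any $f\in\caI_p$ and any frame element $\phi_j$, $1\le j\le n$ (these lie in $\Pi_*\caV$ and satisfy $\Laur_k(\phi_j)=0$ by Lemma \ref{frameeval}), produces a different section with the same restriction to $\Pi^{-1}(p)$ and the same vanishing coefficients. What is true, and is all the corollary needs since it concerns the cotangent fiber of Lemma \ref{cotfib}, is uniqueness modulo $\spn_{\caI_p}(\caF)$: if $\psi,\psi'$ are sections of $\Pi_*\caV$ with $\Laur_k(\psi)=\Laur_k(\psi')=0$ for all $k$ and equal restrictions to $\Pi^{-1}(p)$, then writing $\psi-\psi'$ in the first $(s,t)$ frame, vanishing of the $\Laur_k$ forces the $\phi_{n+k}$-coefficients to vanish identically, and vanishing of the restriction together with linear independence of the restrictions $\phi_j|_{\Pi^{-1}(p)}$ (from Lemma \ref{locfree}, since restriction maps a rank $3\bg-3$ sheaf onto a $(3\bg-3)$-dimensional space) forces the remaining coefficients into $\caI_p$; hence $\psi$ and $\psi'$ define the same cotangent vector at $p$. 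With this weaker uniqueness your derivation of the intertwining identity and of the action on the cotangents $-t_k\tilde\beta_k/\pi$ goes through, now as identities of cotangent classes rather than of sections. The paper sidesteps the issue entirely: it obtains the intertwining identity ``by definition,'' pulling back all the defining data ($\tilde\beta$, the frame $\tilde\beta_h$, and the coordinates) simultaneously, so that no choice-independence statement is required.
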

\begin{proof}
We showed in Theorem \ref{main} that an analytic equivalence of families acts by pulling back the vanishing residue sheaf.  In particular maps $h_X$ act by pulling back $\Pi_*h_X^*\caV$ the direct image of the vanishing residue sheaf.  In particular for $\mathbf a\in\operatorname{Aut}(\mathbf C)$ and associated map $h_X$, we have by definition of the extending sheaf sections map that $h_X^*\caL_{\{z_k,w_k\}} (\beta)=\caL_{\{z_k\circ h_X,w_k\circ h_X\}} (h_X^*\beta)= \caL_{\{z_k\circ h_X,w_k\circ h_X\}} (\mathbf a^*\beta)$ for $\beta\in Q(R)$.  In general the  coordinates $\{z_k\circ h_X,w_k\circ h_X\}$ and $\{z_k,w_k\}$ are distinct and extensions of sections are not preserved by pulling back.  

The group $\operatorname{Aut}(\mathbf C)$ acting on the normalization $R$ has cyclic stabilizer at each distinguished point.  Local coordinates can be chosen with each stabilizer acting by rotations.  For such coordinates, $\operatorname{Aut}(\mathbf C)$ acts by permutations and rotations of coordinates.  For such coordinates, $\operatorname{Aut}(\mathbf C)$ acts preserving extensions of sections of $Q(R)$ to sections of $\Pi_*\caV$.  Further for such coordinates, $\operatorname{Aut}(\mathbf C)$ acts by a permutation of the sections $\tilde\beta_h$, $1\le h \le m$ and by the same permutation on the coordinate indices and by multiplication by unimodular numbers on the coordinate products. 
\end{proof}

\subsection{Plumbing elliptic curves and a pointed $\mathbb P^1$.}

We describe plumbing elliptic curves (flat tori) to a projective line $\mathbb P^1$  pointed at $0$, $-1$ and $1$.  The general plumbing gives a genus $2$ surface with hyperelliptic involution.  We consider the family cotangent frame and the action of the automorphism group of the initial fiber.  The construction begins with an elliptic curve $\mathcal E\subtau$ with variable $z$, universal cover $\mathbb C$ and lattice deck transformation group generated by $1$ and $\tau\in\mathbb H$.  An elliptic curve has quadratic differentials $dz^2$ and $\bP=\caP dz^2$ for the Weierstrass $\caP$-function.  The $\caP$-function is doubly periodic with singular part $1/z^2$ at the origin; $\bP$ has unit residue at the origin.  An elliptic curve has the involution $\imath\subE: z\rightarrow -z$ with the differentials $dz^2$ and $\bP$ involution invariant.  The projective line $\mathbb P^1$ with variable $w$ has Abelian differentials $\omega_{ab}=(a-b)dw/((w-a)(w-b))$ with residues $1$ at $a$ and $-1$ at $b$.  The projective line 
$\mathbb P^1$ pointed at $0$, $-1$ and $1$ has the involution $\imath\subP:w\rightarrow -w$. 

Define a nodal stable curve $\mathbf C$ by pairing distinguished points as follows.  Pair the origins on $\caE\subtau$ and $\mathbb P^1$ to form a first node $\bn\subE$, and pair the points $-1$ and $1$ on $\mathbb P^1$ to form a second node $\bn\subP$.  The involutions $\imath\subE$ and $\imath\subP$ of components define involutions on the stable curve by extending by the identity on the remaining component.  Define plumbings of the stable curve $\mathbf C$ as follows.  At the node $\bn\subE$, use the local coordinates 
$u\subE=z$, $v\subE=w$ and write $u\subE v\subE=t\subE$ for plumbing.  At the node $\bn\subP$, use the local coordinates $u\subP=w+1$, $v\subP=w-1$ and write $u\subP v\subP=t\subP$ for plumbing.  The elliptic curves $\caE\subtau$ and plumbings define a family $\caR$ with general fiber a genus $2$ surface over a $3$-dimensional base $\caU$ with base parameters $(t\subE,t\subP,\tau)$.  Following Corollary \ref{aut}, the action of the $\mathbf C$ involutions on the local coordinates at distinguished points is as follows.  The involution $\imath\subE$ maps $u\subE$ to $-u\subE$ and fixes the remaining coordinates.  The involution $\imath\subP$ fixes $u\subE$, maps $v\subE$ to $-v\subE$, $u\subP$ to $-v\subP$ and $v\subP$ to $-u\subP$. It follows immediately that the involutions each act on the $\caU$ parameters by the map $(t\subE,t\subP,\tau)\rightarrow (-t\subE,t\subP,\tau)$. 

We describe sections of the $\mathbf C$ dualizing sheaf square and following the discussion after Lemma \ref{cotfib}, describe the extension of the sections by the linear map $\Lsubuv$  to sections of the relative dualizing sheaf of $\caR$ over $\caU$.  Given the action of the involutions on the $u$, $v$ local coordinates, by Corollary \ref{aut} the involutions $\imath\subE$ and $\imath\subP$ stabilize the linear map $\Lsubuv$.  To define sections of the $\mathbf C$ dualizing sheaf square, begin with the $\caE$ node and consider $(-1/4\pi)(\bP+\omega_{0-1}\omega_{01})$.  By construction, the section has residue $-1/4\pi$ at $\bn\subE$ and residue $0$ at $\bn\subP$ with simple poles at the points $\pm 1$.  The section is invariant by the involutions $\imath\subE$ and $\imath\subP$.  We write $\beta\subE$ for the extension $\Lsubuv((-1/4\pi)(\bP+\omega_{0-1}\omega_{01}))$ to a section of the relative dualizing sheaf $\Pi_*\omega^2_{\caR/\caU}$ with $\Laur_{\bn\subE}(\beta\subE)=-1/\pi$ and $\Laur_{\bn\subP}(\beta\subE)=0$.  The section $\beta\subE$ is invariant by the involutions acting on $\caR$, since the dualizing sheaf section is invariant and the involutions stabilize the map $\Lsubuv$.  
Next consider the $\mathbb P^1$ node and the section $(-1/4\pi)(\omega_{-11}^2)$ extended by zero on $\caE$.  By construction, the section has residue $0$ at $\bn\subE$ and $-1/4\pi$ at $\bn\subP$.  The section is invariant by the involutions.    
We write $\beta\subP$ for the extension $\Lsubuv((-1/4\pi)(\omega_{-11}^2))$ to a section of the relative dualizing sheaf $\Pi_*\omega^2_{\caR/\caU}$ with 
$\Laur_{\bn\subE}(\beta\subP)=0$ and $\Laur_{\bn\subP}(\beta\subP)=-1/\pi$.  Similar to $\beta\subE$, the section $\beta\subP$ is invariant by the involutions acting on $\caR$.    Finally consider the section $-2idz^2$ on $\caE$, extended by zero on $\mathbb P^1$.  The section is invariant by the involutions. We write $\beta\subdz$ for the extension $\Lsubuv(-2idz^2)$ to a section of the relative dualizing sheaf $\Pi_*\omega^2_{\caR/\caU}$ with vanishing $\Laur$ coefficients.  Similar to $\beta\subE$ and $\beta\subP$, the section $\beta\subdz$ is invariant by the involutions acting on $\caR$. 

We consider the expansions of the sections $t\subE\beta\subE,t\subP\beta\subP,\beta\subdz$ at nodes as examples of our overall considerations.  For a node $uv=t$ and a section $\eta$ of $\omega_{\caR/\caU}^2$ given as
\[
\bff(u,v,s)\big(\frac{du}{u}-\frac{dv}{v}\big)^2,
\]
$s$ representing the remaining variables, the interpretation of initial derivatives of $\bff$ is as follows.  The vanishing residue condition is $\bff(0,0,s)=0$. The derivatives $\bff_u(0,0,s)$ and $\bff_v(0,0,s)$ are the coefficients of the terms $(du)^2/u$ and $(dv)^2/v$ in the expansion of $\eta$ on the branches of the normalization of the node.  Finally the derivative $4\bff_{uv}(0,0,s)$ is the $t$-linear term of the function $\Laur(\eta)$ of $t$.  At the node $\bn\subE$, the local coordinates for $\caR$ are $u\subE,v\subE,t\subP$ and $\tau$.  At the node $\bn\subP$, the local coordinates for $\caR$ are  
$u\subP,v\subP,t\subE$ and $\tau$. For the sections $t\subE\beta\subE,t\subP\beta\subP,\beta\subdz$, the expansions $\bff_uu+\bff_vv+\bff_{uv}uv$ modulo $O(u^2)+O(v^2)$ remainders are as follows. 
The section $t\subE\beta\subE$ has expansion $-u\subE v\subE/4\pi$ at $\bn\subE$ and $0$ at $\bn\subP$.  The section $t\subP\beta\subP$ has expansion $-u\subP v\subP/4\pi$ at $\bn\subP$ and $0$ at $\bn\subE$.  The section $\beta\subdz$ has $0$ expansion at each node.  It now follows from Lemma \ref{plumderiv} that on the domain of sections, the pairing of plumbing tangents with the sections $t\subE\beta\subE,t\subP\beta\subP,\beta\subdz$ is given as follows: $\partial/\partial t\subE$ pairs to the values $(1,0,0)$ and $\partial/\partial t\subP$ pairs to the values $(0,1,0)$.  Furthermore by the Gr\"{o}tzsch and Rauch variational formula, for $t\subE=t\subP=0$ the section $\beta\subdz$ represents the cotangent $d\tau$ and thus $\partial/\partial\tau$ pairs with the sections $t\subE\beta\subE,t\subP\beta\subP,\beta\subdz$ to the values $(0,0,1)$.  
For $t\subE=t\subP=0$, the sections $t\subE\beta\subE,t\subP\beta\subP,\beta\subdz$ are the coordinate cotangent frame for the base $\caU$.  
Combining the $\imath\subE$ and $\imath\subP$ invariance of the sections $\beta\subE,\beta\subP,\beta\subdz$ and the involution action on coordinates $(t\subE,t\subP,\tau)$ gives the expected action $(t\subE\beta\subE,t\subP\beta\subP,\beta\subdz)\rightarrow
(-t\subE\beta\subE,t\subP\beta\subP,\beta\subdz)$ on cotangent representatives. 
     
By Theorem \ref{DM}, the $(s,t)$ family $\caR$ over $\caU$ with $\mathbf C$ automorphism action provides a local analytic description for the Deligne-Mumford compactification. The action of the involutions follows general expectations, \cite[Chapter 11, Proposition 4.11]{ACG2}.  The product $\imath\subE\imath\subP$ acts on the fiber $\mathbf C$ as the limit of hyperelliptic involutions and for genus $2$ the action is everywhere trivial.  The action of a single involution is the standard {\em half Dehn twist for a $1$-handle}, or equivalently the {\em $2$-torsion associated with an elliptic tail.}   

\subsection{An example of plumbing an Abelian differential.}

We apply a standard construction for plumbing an Abelian differential and then calculate the variation of its period.  The plumbing provides an example of a section of the direct image of $\caV$ as a cotangent sheaf section and a demonstration for Rauch's period variation formula \cite{Rauch}.  We calculate the period variation by two approaches and then compare.  

Begin with a compact Riemann surface $R$ with a canonical homology basis $\{A_j,B_j\}_{j=1}^g$, given by representative cycles and with points $a,b$ disjoint from the cycles. By 
Riemann-Roch there is a meromorphic Abelian differential with residue $-1$ at $b$ and $1$ at $a$ \cite{GunRS}.  The meromorphic differential is unique modulo the analytic differentials. A unique differential $\omega$ of the third kind is determined by the condition of vanishing periods on the given homology basis.  Introduce local coordinates $u$ at $a$ and $v$ at $b$, such that $u(a)=v(b)=0$, and such that the charts include the discs $\{|u|\le1\},\,\{|v|\le1\}$. Coordinates $u,v$ are analytically specified by the condition that on the discs, $\omega$ is respectively given as $du/2\pi iu$ and as $-dv/2\pi iv$.   We can use the coordinates to describe points near $a$ or $b$.  Introduce an arc $\hat\gamma$ from $v=1$ to $u=1$, disjoint from the basis of cycles and contained in $R-\{|u|<1\}-\{|v|<1\}$. 

The plumbing of $R$ is as follows.  Given $t$ nonzero, remove the closed discs $\{|v|\le|t|\}$ and $\{|u|\le|t|\}$.  Overlap the annuli $\{|t|<|u|\}$ and $\{|t|<|v|<1\}$ by $uv=t$ to obtain the plumbed surface $R_t$.  Since the identification is $u=t/v$, the differential $\omega$ plumbs to a differential $\omega_t$ on $R_t$.  Introduce an oriented arc $\gamma_t$ from $1$ to $t$ in the annulus $\{|t|\le|u|\le 1\}$ and disjoint from $\arg u=\pi$.  The arc concatenation $\gamma_t+\hat\gamma$ determines an oriented cycle on $R_t$, disjoint from the original basis of cycles.  The image of the positively oriented circle $\{|u|=1\}$ also defines a cycle $A_*$.  By construction the integral of $\omega_t$ over $A_*$ is unity and the genus of $R_t$ is one greater than the genus of $R$.  The combination of images of cycles $\{A_j,A_*,B_j,\gamma_t+\hat\gamma\}_{j=1}^g$ is a canonical homology basis for $R_t$.  The Abelian differential $\omega_t$ is the dual to $A_*$ relative to the $A$ cycles of the basis.  
The period
\[
\caP\,=\,\int_{\gamma_t+\hat\gamma}\omega_t
\]
is an entry in the Riemann period matrix of $R_t$.  We set $\hat\caP=\int_{\hat\gamma}\omega_t$ and from the above construction $\int_{\gamma_t}\omega_t=(\log t)/2\pi i$.  The exponential of the period 
\[
\exp(2\pi i\caP)=\exp(2\pi i\hat\caP)t
\]
extends to an analytic function on the disc $\{|t|<1\}$.  The derivative of the function is $\exp(2\pi i\hat\caP)$.  We can also calculate the derivative by combining Rauch's variational formula \cite[Theorem 2]{Rauch} and Lemma \ref{plumderiv}.  Preliminary observations are necessary.   First, by construction in a neighborhood of the plumbing, the $t$ family $R_t$ coincides with the family $\piVD$.  Second, $R_t$ locally embeds into a fiber of $\piVD$ by the 
map $u\rightarrow(u,t/u)$ and consequently $du/u$ is the pullback of $1/2(dz/z-dw/w)$ on $V$.   Third, Rauch's formula is given in terms of the $2$-form $dz\wedge d\bar z=-2idE$ for the Beltrami - quadratic differential pairing, whereas the present formulas are given in terms of the $2$-form $dE$; see (\ref{Serrepair}).  Applying the period variation formula, Lemma \ref{plumderiv} and the observations gives 
\[
\frac{d}{dt}\,e^{2\pi i\caP}\,=\,e^{2\pi i\caP}\,2\pi i\frac{d\caP}{dt}\,=\,e^{2\pi i\caP}\,2\pi i\,\frac{-\pi}{t}\Laur(-2i\omega_t^2).
\]
In a neighborhood of the plumbing, $\omega_t$ is given as 
\[
\omega_t= \frac{1}{4\pi i}\dzdw 
\] 
and 
\[
\Laur\big(\frac14\dzdw^2\big)=1.
\]  
Combining contributions, the derivative of the exponential of the period is $\exp(2\pi i\caP)/t=\exp(2\pi i\hat\caP)$, matching the direct calculation.       

The derivative formula can be expressed as the differential of a function by $d\exp(2\pi i\caP)=\exp(2\pi i\caP)4\pi\omega_t^2$.  The family $R_t$ is one dimensional with coordinate $t=\exp(2\pi i(\caP-\hat\caP))$ with $\hat\caP$ a constant. The derivative formula can also be given as $dt=4\pi t\omega_t^2$, with the right hand side a second frame element, in particular a section of the direct image sheaf $\Pi_*\caV$. 

%
%\nocite{ACGH}
%
%\bibliographystyle{alpha}
%\bibliography{scottbib}

\providecommand\WlName[1]{#1}\providecommand\WpName[1]{#1}\providecommand\Wl{W%
lf}\providecommand\Wp{Wlp}\def\cprime{$'$}

% Set the ending of a LaTeX document
\end{document}